\documentclass[12pt,reqno]{article}
\usepackage{amsfonts}
\pagestyle{plain}
\usepackage{amsmath, amsthm, amsfonts, amssymb, color}
\usepackage{mathrsfs}
\usepackage{fixmath}
\usepackage{graphicx}
\allowdisplaybreaks

\setlength{\topmargin}{-1cm} \setlength{\oddsidemargin}{0cm}
\setlength{\evensidemargin}{0cm} \setlength{\textwidth}{16.5truecm}
\setlength{\textheight}{23truecm}
\allowdisplaybreaks

\newtheorem{thm}{Theorem}[section]

\newtheorem{lem}[thm]{Lemma}

\newtheorem{ass}[thm]{Assumption}
\newtheorem{rem}{Remark}

\theoremstyle{definition}
\newtheorem{defn}{Definition}[section]

\numberwithin{equation}{section}
\bibliographystyle{abbrv}

\author{
	Zhuoqi Liu,\thanks{Department of Mathematics, Shanghai Normal University, Shanghai 200234, China}
	\and 
	Qian Guo,\thanks{Department of Mathematics, Shanghai Normal University, Shanghai 200234, China}
	\and 
	 Shuaibin Gao,\thanks{Corresponding author. Department of Mathematics, Shanghai Normal University, Shanghai 200234, China, shuaibingao@163.com}
	}

\begin{document}

	
	
	
	\title{Mean-square convergence and stability of the backward Euler method for  stochastic differential delay equations with highly nonlinear growing coefficients}
	
	
	
	\maketitle

		\begin{abstract}
			
			Over the last few decades, the numerical methods for stochastic differential delay equations (SDDEs) have been investigated and developed by many scholars. Nevertheless, there is still little work to be completed. By virtue of the novel technique, this paper focuses on the mean-square convergence and stability of the backward Euler method (BEM) for SDDEs whose drift and diffusion coefficients can both grow polynomially.
			The upper mean-square error bounds of BEM are obtained. Then the convergence rate, which is one-half, is revealed without using the moment boundedness of numerical solutions. Furthermore, under fairly general conditions, the novel technique is applied to prove that the BEM can inherit the exponential mean-square stability with a simple proof. At last, two numerical experiments are implemented to illustrate the reliability of the theories.
		\end{abstract}
		
		\noindent
		{\bf Keywords}:
				The backward Euler method; Stochastic differential delay equations; Mean-square convergence; Mean-square stability

	\section{Introduction}
	Stochastic differential equations (SDEs) have been investigated by many scholars due to their extensive applications in many fields, including control problems, finance, biology, population model, communication, etc \cite{1,14,18}. However, it is difficult to obtain the exact solutions to SDEs. So using numerical algorithms to aquire approximations is a meaningful way to analyze the properties of solutions. It is well known that  Euler-Maruyama (EM) method is one of the most popular numerical algorithms for SDEs \cite{14,18}. 
	Unfortunately, the divergence of EM scheme for SDEs with super-linear coefficients was proved in \cite{11}. 
	Whereafter, different kinds of modified EM methods have been established to approximate nonlinear SDEs, such as truncated EM method\cite{15,20}, tamed EM method\cite{12,25}, stopped EM method\cite{17}, multilevel EM method\cite{2}, projected EM method\cite{6} and others. Furthermore, the implicit methods have also been studied and developed on account of their better convergence rates in the last decades \cite{3,4,14,26,30}.
	
	The scholars use stochastic differential delay equations(SDDEs) to describe a class of more applicative systems which not only depend on the present state but also depend on the past state\cite{7,18}. 
	Like SDEs, the numerical methods of SDDEs have also been widely discussed. 
	The modified EM methods for SDDEs were analyzed in\cite{fei2020,8,9,16,song2022}, while the implicit EM methods were investigated in \cite{201,202,203,130,129}. 
	It is worth noting that the diffusion coefficients of the equations in \cite{201,202,203,130,129} can not grow super-linearly, which has a adverse effect on the development of the implicit methods.
	In order to eliminate this adverse effect, the papers \cite{28,34,29} and \cite{27,131} focused on studying the backward Euler method (BEM) and split-step method for SDDEs respectively, whose drift and diffusion coefficients can grow super-linearly.
	However, the convergence rate was not shown in \cite{28}; 
	there is no constant in the diffusion coefficient in \cite{34,29}, which is a strong constraint.
	Therefore, by exploiting the novel technique, the first goal of our paper is to investigate the strong convergence rate of the BEM for SDDEs with highly nonlinear drift and diffusion coefficients under the weaker conditions.

	As is known to all, in addition to the convergence, the long-time stability of the numerical solution is also worth studying.
	The stabilities of implicit EM methods for SDEs were given in \cite{103,104,21,40}.
	In the rest of this paragraph, we only discuss the stability of the implicit scheme for SDDEs.
	When the diffusion coefficients satisfy $|g(x(t),x(t-\tau))|^2\leq c_1|x(t)|^2+c_2|x(t-\tau)|^2$ (where $c_1,c_2$ are positive constants and $x(t-\tau)$ is the delay term), the  stabilities of the BEM solutions to SDDEs were studied in \cite{121,120,119,111,110,117,116,112,118,109}. 
	The theories in \cite{34,28,29} can not cover the equations with diffusion coefficient $g(x(t),x(t-\tau))=x(t-\tau)$. Similarly, to a degree, Assumption 2.2 in \cite{123} is also a bit strong. Moreover, the stabilities of split-step methods were analyzed in \cite{126,127,128,129,130}.
	Especially, it should be noted that  the constraint of coefficients was relaxed in \cite{122}, but the  locally Lipschitz conditions of  coefficients were used in the proof process. 
	Hence, the second goal of our paper is to prove that the numerical solutions to BEM for SDDEs are exponentially mean-square stable without using the  locally Lipschitz conditions.
	That is, by making use of the novel technique, 
	the mean-square stability of the numerical solutions can be obtained easily under the weaker conditions .
	
	Let's summarize the main contributions of our paper. Under the fairly general conditions, by borrowing the techniques from\cite{3,31,30}, we investigate the strong convergence rate and exponential mean-square stability of the BEM for SDDEs whose drift and diffusion coefficients can grow super-linearly.

	To use the novel technique, we introduce the crucial equality
	\begin{equation}\label{cru}
		2\langle x-y,x \rangle=|x|^2-|y|^2+|x-y|^2,~~~~~~~\forall x, y\in\mathbb{R}^d,
	\end{equation}
	which will play an important role in our paper.		
	Moreover, the discussions of the comparison between explicit and implicit numerical schemes can be found in \cite{32,33,31,30} and references therein.
	
	This paper is organized as follows. In Section 2, we introduce some necessary notations and prove that the global error in mean-square sence is controlled by the local error. Section 3 gives the convergence rate of BEM without using the moment boundedness of numerical solutions. In Section 4, under a stronger condition, the convergence rate is given by a much simpler proof. In Section 5, we present the exponential mean-square stability of BEM. In Section 6, two numerical examples are considered to illustrate the reliability of the theories.

	\section{Error bounds for BEM}
	 
	Let $|\cdot |$ and$<\cdot,\cdot>$ denote the Euclidean norm and the inner product of vectors in $\mathbb{R}^d$. 
	We use $\mathbb{N}$ to denote the set of all positive integers.
	If $A\in \mathbb{R}^{d\times m}$ is a matrix, its trace norm is denoted by$\|A\|=\sqrt{trace(A^TA)}$, where $A^T$ is the transpose of matrix A, $d,m\in\mathbb{N}$.
	Let $\big ( \Omega, \mathcal{F}, \{\mathcal{F}_t\}_{t\in[0 , T]}, \mathbb{P} \big )$ stand for a complete probability space with a filtration  $\{\mathcal{F}_t\}_{t\in[0 , T]}$ satisfying the usual conditions (i.e., it is increasing and right continuous while $ \mathcal{F}_0$ contains all $\mathbb{P}$-null sets).
	Let $\mathbb{E}$ be the probability expectation with respect to $\mathbb{P}$.
	Let $\mathcal{L}^r =\mathcal{L}^r( \Omega, \mathbb{R}^d )$ be the family of $\mathbb{R}^d$-valued random variables $\xi$ satisfying $\mathbb{E}|\xi|^r<\infty$, $r\in \mathbb{N}$.
	Let $\mathcal{C} ([-\tau,0];\mathbb{R}^d)$ stand for the family of all continuous functions from  $[-\tau,0]$  to   $\mathbb{R}^d$ with the norm $|\varphi|_\infty=\sup_{- \tau \leq \theta \leq 0}|\varphi ( \theta )|$.  
	And denote by C a generic positive constant which is independent of time stepsize.
	
	Now we consider the nonlinear SDDE of the form
	\begin{equation}\label{sdde1}
		d x(t)=\alpha\left(x(t), x(t-\tau)\right) d t+\beta\left(x(t), x(t-\tau)\right) dW(t),
	\end{equation}
	on $t\geq 0$ with the initial data
	\begin{equation}\label{initial}
		x_{0}=\varphi=\{ \varphi ( \theta ) : - \tau \leq \theta \leq 0 \}\in \mathcal{C} ([ - \tau , 0 ];\mathbb{R}^d),
	\end{equation}
	where
	$\alpha :\mathbb{R}^d\times \mathbb{R}^d \rightarrow \mathbb{R}^d$ and  $\beta :\mathbb{R}^d\times \mathbb{R}^d \rightarrow \mathbb{R}^{d\times m}$.
	Moreover, $W(t)$ is an m-dimensional Brownian motion.
	
	Now we construct the BEM for SDDEs. Suppose that there exist two positive integers $N,M$ such that $\Delta=\frac{T}{N}=\frac{\tau}{M}$, where $\Delta$ is the step size. Define
	\begin{align}\label{bem1}
		\left \{\begin{array}{l}
			Z_{n}=\varphi (n\Delta),~~~~~~~~~~~n=-M , -M+1 , ... , 0, \\
			Z_{n}=Z_{n-1}+\alpha(Z_{n} , Z_{n-M}) \Delta+\beta(Z_{n-1} , Z_{n-1-M}) \Delta W_{n-1},~~n=1 , 2, ... , N,
		\end{array}\right .
	\end{align}
	where $\Delta W_{n-1}:=W(t_n)-W(t_{n-1})$.

	Before analyzing the errors between exact solutions and numerical solutions, some necessary assumptions should be imposed.

	\begin{ass}\label{a1}
		There exist constants $K>0$ and $q>2$ such that
		\begin{equation*}
			\begin{split}
				&\langle x-\bar{x} , \alpha(x,y)-\alpha(\bar{x},\bar{y})\rangle +\frac{(q-1)}{2}\|\beta(x,y)-\beta(\bar{x},\bar{y})\|^{2}\\
				\leq& K(|x-\bar{x}|^{2}+|y-\bar{y}|^{2}),
			\end{split}
		\end{equation*}
		for any $x,y,\bar{x},\bar{y}\in \mathbb{R}^d$.
	\end{ass}
By Assumption \ref{a1}, we can get that
	\begin{equation}\label{*}
			\langle x-\bar{x} , \alpha(x,y)-\alpha(\bar{x},y)\rangle 
			\leq K|x-\bar{x}|^{2},
	\end{equation}	
		for any $x,y,\bar{x}\in \mathbb{R}^d$.
	When $K\Delta<1$, (\ref{bem1}) has a unique solution due to the fixed point theorem. Then the BEM is well defined \cite{40}.
	\begin{ass}\label{a2}
		Suppose that the SDDE (\ref{sdde1}) admits a unique $\{\mathcal{F}_t\}_{t\in[0 , T]}$-adapted solution with continuous sample paths, 
		and $\sup_{s\in[0 , T]}\mathbb{E}|x(s)|^2<\infty$, $\sup_{s\in[0 , T]}\mathbb{E}|\alpha(x(s),x(s-\tau))|^2<\infty$ hold. Furthermore, assume that the BEM admits a unique $\{\mathcal{F}_{t_n}\}_{n\in\{1,2,\cdots,N\}}$-adapted solution$\{Z_n\}_{n=0}^N$ as well.

	\end{ass}
	
	\begin{lem}\label{le1}
		Let Assumption \ref{a2} hold. For ${n\in\{1,2,...,N\}}$, $N\in\mathbb{N}$, we have

		\begin{equation}\label{equ1}			
			\begin{split}
				&\big|[x(t_n)-Z_n]-\Delta[\alpha(x(t_n),x(t_{n-M}))-\alpha(Z_n,Z_{n-M})]\big|^2\\
				=&\big|[x(t_{n-1})-Z_{n-1}]-\Delta[\alpha(x(t_{n-1}),x(t_{n-1-M}))-\alpha(Z_{n-1},Z_{n-1-M})]\big|^2\\
				&-\Delta^2\big|\alpha(x(t_{n-1}),x(t_{n-1-M}))-\alpha(Z_{n-1},Z_{n-1-M})\big|^2\\
				&+\big|[\beta(x(t_{n-1}),x(t_{n-1-M}))-\beta(Z_{n-1},Z_{n-1-M})]\Delta W_{n-1}\big|^2+|\mathcal{R}_n|^2\\	
				&+2\Delta \langle x(t_{n-1})-Z_{n-1} , \alpha(x(t_{n-1}),x(t_{n-1-M}))-\alpha(Z_{n-1},Z_{n-1-M})  \rangle\\
				&+2 \langle [x(t_{n-1})-Z_{n-1}]-\Delta[\alpha(x(t_{n-1}),x(t_{n-1-M}))-\alpha(Z_{n-1},Z_{n-1-M})] ,\\
				&~~~~~~[\beta(x(t_{n-1}),x(t_{n-1-M}))-\beta(Z_{n-1},Z_{n-1-M})]\Delta W_{n-1}  \rangle\\
				&+2\langle [x(t_{n-1})-Z_{n-1}]-\Delta[\alpha(x(t_{n-1}),x(t_{n-1-M}))-\alpha(Z_{n-1},Z_{n-1-M})] , \mathcal{R}_n \rangle\\
				&+2\Delta \langle \alpha(x(t_{n-1}),x(t_{n-1-M}))-\alpha(Z_{n-1},Z_{n-1-M}),\\ &~~~~~~~~[\beta(x(t_{n-1}),x(t_{n-1-M}))-\beta(Z_{n-1},Z_{n-1-M})]\Delta W_{n-1}  \rangle\\
				&+2\Delta \langle \alpha(x(t_{n-1}),x(t_{n-1-M}))-\alpha(Z_{n-1},Z_{n-1-M}) ,\mathcal{R}_n  \rangle\\
				&+2\langle[\beta(x(t_{n-1}),x(t_{n-1-M}))-\beta(Z_{n-1},Z_{n-1-M})]\Delta W_{n-1} , \mathcal{R}_n\rangle,
			\end{split}
		\end{equation}
		
		where
		
		\begin{equation}\label{Rn}
			\begin{split}
				\mathcal{R}_n=&\int_{t_{n-1}}^{t_n} [\alpha(x(s) , x(s-\tau))-\alpha(x(t_n)-x(t_{n-M}))] ds\\
				&+\int_{t_{n-1}}^{t_n} [\beta(x(s) , x(s-\tau))-\beta(x(t_{n-1})-x(t_{n-1-M}))]dW(s).
			\end{split}
		\end{equation}
	\end{lem}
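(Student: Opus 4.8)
The claim in Lemma~\ref{le1} is a pathwise algebraic identity: for each fixed sample point and each $n$ it equates two real numbers, so no stochastic estimate is involved and Assumption~\ref{a2} is needed only to guarantee that $x(t_n)$, $Z_n$ and the stochastic integrals in $\mathcal{R}_n$ are well defined. The plan is therefore to derive the one-step recursion for the error and then expand a single squared Euclidean norm, using the crucial equality (\ref{cru}) to split the drift contribution. To control the bookkeeping I would abbreviate the error by $e_n:=x(t_n)-Z_n$, the drift difference by $a_n:=\alpha(x(t_n),x(t_{n-M}))-\alpha(Z_n,Z_{n-M})$ and the diffusion difference by $b_{n-1}:=\beta(x(t_{n-1}),x(t_{n-1-M}))-\beta(Z_{n-1},Z_{n-1-M})$, so that the left-hand side of (\ref{equ1}) is just $|e_n-\Delta a_n|^2$.

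First I would write the exact solution (\ref{sdde1}) in integral form on $[t_{n-1},t_n]$ and add and subtract the frozen integrands $\alpha(x(t_n),x(t_{n-M}))$ and $\beta(x(t_{n-1}),x(t_{n-1-M}))$. Since each of these is constant in $s$, integrating them over an interval of length $\Delta$ produces $\Delta\,\alpha(x(t_n),x(t_{n-M}))$ and $\beta(x(t_{n-1}),x(t_{n-1-M}))\Delta W_{n-1}$, while the leftover is exactly $\mathcal{R}_n$ as defined in (\ref{Rn}). The key structural point to respect is the asymmetry built into (\ref{Rn}): the drift is anchored at $t_n$ and the diffusion at $t_{n-1}$, matching the implicit-drift/explicit-diffusion form of the scheme (\ref{bem1}). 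Subtracting (\ref{bem1}) from this representation of the exact increment combines the exact and numerical coefficients into the differences $a_n$ and $b_{n-1}$, giving $e_n-e_{n-1}=\Delta a_n+b_{n-1}\Delta W_{n-1}+\mathcal{R}_n$; moving the implicit drift term to the left then yields the recursion
\[
e_n-\Delta a_n=e_{n-1}+b_{n-1}\Delta W_{n-1}+\mathcal{R}_n .
\]

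Next I would square this identity. The right-hand side expands into $|e_{n-1}|^2$, the two squared terms $|b_{n-1}\Delta W_{n-1}|^2$ and $|\mathcal{R}_n|^2$, and the three cross terms $2\langle e_{n-1},b_{n-1}\Delta W_{n-1}\rangle$, $2\langle e_{n-1},\mathcal{R}_n\rangle$ and $2\langle b_{n-1}\Delta W_{n-1},\mathcal{R}_n\rangle$. This is where (\ref{cru}) does the essential work: applying it with $x=e_{n-1}$ and $y=e_{n-1}-\Delta a_{n-1}$ (so that $x-y=\Delta a_{n-1}$) rewrites
\[
|e_{n-1}|^2=|e_{n-1}-\Delta a_{n-1}|^2-\Delta^2|a_{n-1}|^2+2\Delta\langle e_{n-1},a_{n-1}\rangle ,
\]
which furnishes precisely the first, second and fifth terms on the right of (\ref{equ1}). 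To produce the remaining terms, in which the quantity $e_{n-1}-\Delta a_{n-1}$ appears inside the inner products, I would split each surviving cross term through $e_{n-1}=(e_{n-1}-\Delta a_{n-1})+\Delta a_{n-1}$; this turns $2\langle e_{n-1},b_{n-1}\Delta W_{n-1}\rangle$ into the sixth plus eighth terms and $2\langle e_{n-1},\mathcal{R}_n\rangle$ into the seventh plus ninth terms, while the last cross term is already the tenth term. Collecting the pieces reproduces (\ref{equ1}) verbatim.

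Because the statement is an exact identity there is no analytic difficulty; the only real danger is clerical. The step most prone to error is the application of (\ref{cru}): one must pick the pair $(x,y)=(e_{n-1},\,e_{n-1}-\Delta a_{n-1})$ in the right order so that the sign in front of $\Delta^2|a_{n-1}|^2$ comes out negative, and one must keep the drift frozen at $t_n$ but the diffusion frozen at $t_{n-1}$ throughout, consistently with (\ref{Rn}). Getting these two conventions right is exactly what makes the ten terms on the right-hand side of (\ref{equ1}) fall into place.
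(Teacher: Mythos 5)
Your proposal is correct and follows essentially the same route as the paper: derive the one-step recursion $e_n-\Delta a_n=e_{n-1}+b_{n-1}\Delta W_{n-1}+\mathcal{R}_n$ by adding and subtracting the frozen drift (at $t_n$) and diffusion (at $t_{n-1}$), then expand the square; the paper merely adds and subtracts $\Delta a_{n-1}$ before squaring and expands the resulting four-term sum directly, whereas you expand the three-term sum and recover the $|e_{n-1}-\Delta a_{n-1}|^2$, $-\Delta^2|a_{n-1}|^2$ and $2\Delta\langle e_{n-1},a_{n-1}\rangle$ pieces via (\ref{cru}) — an algebraically identical rearrangement.
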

	
	\begin{proof} 
		
		From the definitions of (\ref{sdde1}) and (\ref{bem1}), one can see that

		\begin{equation*}
			\begin{split}
				&[x(t_n)-Z_n]-\Delta[\alpha(x(t_n),x(t_{n-M}))-\alpha(Z_n,Z_{n-M})]\\
				=&[x(t_{n-1})-Z_{n-1}]-\Delta[\alpha(x(t_{n-1}),x(t_{n-1-M}))-\alpha(Z_{n-1},Z_{n-1-M})]\\
				&+\Delta[\alpha(x(t_{n-1}),x(t_{n-1-M}))-\alpha(Z_{n-1},Z_{n-1-M})]\\
				&+[\beta(x(t_{n-1}),x(t_{n-1-M}))-\beta(Z_{n-1},Z_{n-1-M})]\Delta W_{n-1}\\
				&+\mathcal{R}_n ,
			\end{split}
		\end{equation*}
		where $\mathcal{R}_n$ is defined by (\ref{Rn}).
		Then we have
		\begin{equation*}			
			\begin{split}
				&\big|[x(t_n)-Z_n]-\Delta[\alpha(x(t_n),x(t_{n-M}))-\alpha(Z_n,Z_{n-M})]\big|^2\\
				=&\big|[x(t_{n-1})-Z_{n-1}]-\Delta[\alpha(x(t_{n-1}),x(t_{n-1-M}))-\alpha(Z_{n-1},Z_{n-1-M})]\big|^2\\
				&+\Delta^2\big|\alpha(x(t_{n-1}),x(t_{n-1-M}))-\alpha(Z_{n-1},Z_{n-1-M})\big|^2\\
				&+\big|[\beta(x(t_{n-1}),x(t_{n-1-M}))-\beta(Z_{n-1},Z_{n-1-M})]\Delta W_{n-1}\big|^2+|\mathcal{R}_n|^2\\	
				&+2\Delta \langle x(t_{n-1})-Z_{n-1} , \alpha(x(t_{n-1}),x(t_{n-1-M}))-\alpha(Z_{n-1},Z_{n-1-M}) \rangle\\
				&-2\Delta^2\big|\alpha(x(t_{n-1}),x(t_{n-1-M}))-\alpha(Z_{n-1},Z_{n-1-M})\big|^2\\
				&+2 \langle [x(t_{n-1})-Z_{n-1}]-\Delta[\alpha(x(t_{n-1}),x(t_{n-1-M}))-\alpha(Z_{n-1},Z_{n-1-M})] ,\\
				&~~~~~~[\beta(x(t_{n-1}),x(t_{n-1-M}))-\beta(Z_{n-1},Z_{n-1-M})]\Delta W_{n-1}  \rangle\\
				&+2\langle[x(t_{n-1})-Z_{n-1}]-\Delta[\alpha(x(t_{n-1}),x(t_{n-1-M}))-\alpha(Z_{n-1},Z_{n-1-M})] , \mathcal{R}_n\rangle\\
				&+2\Delta \langle \alpha(x(t_{n-1}),x(t_{n-1-M}))-\alpha(Z_{n-1},Z_{n-1-M}),\\ &~~~~~~~~~[\beta(x(t_{n-1}),x(t_{n-1-M}))-\beta(Z_{n-1},Z_{n-1-M})]\Delta W_{n-1}  \rangle\\
				&+2\Delta \langle \alpha(x(t_{n-1}),x(t_{n-1-M}))-\alpha(Z_{n-1},Z_{n-1-M}) ,\mathcal{R}_n  \rangle\\
				&+2\langle[\beta(x(t_{n-1}),x(t_{n-1-M}))-\beta(Z_{n-1},Z_{n-1-M})]\Delta W_{n-1} , \mathcal{R}_n\rangle.
			\end{split}
		\end{equation*}
		Rearranging the above equation gives the result.
	\end{proof}
	\begin{lem}\label{le2}
		Let Assumptions \ref{a1},\ref{a2} hold and $0<2K\Delta<1$. Then for all $n\in \{1,2,...,N\}$, $N\in\mathbb{N}$, we derive that
		
		\begin{equation*}	
			\mathbb{E}\big| \mathcal{R}_n\big|^2<\infty,  ~~~~~ ~~~  \mathbb{E}\big| x(t_n)-Z_n\big|^2<\infty,
		\end{equation*}	
		\begin{equation*}	
			\mathbb{E}\big| [x(t_n)-Z_n]-\Delta[\alpha(x(t_n),x(t_{n-M}))-\alpha(Z_n,Z_{n-M})]\big
			|^2<\infty,
		\end{equation*}	
		\begin{equation*}	
			\mathbb{E}\big|\alpha(x(t_n),x(t_{n-M}))-\alpha(Z_n,Z_{n-M})\big|^2<\infty,
		\end{equation*}	
		\begin{equation*}	
			\mathbb{E}\|\beta(x(t_n),x(t_{n-M}))-\beta(Z_n,Z_{n-M})]\Delta W_{n-1} \|^2<\infty.
		\end{equation*}	
	\end{lem}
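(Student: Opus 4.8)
The plan is to argue by induction on $n$, treating the local-error term $\mathcal{R}_n$ separately since it involves only the exact solution. For the base case $n\in\{-M,\dots,0\}$ the claims are vacuous: the exact solution coincides with the initial data on $[-\tau,0]$, so $x(t_n)-Z_n=\varphi(n\Delta)-\varphi(n\Delta)=0$ and every difference appearing in the statement vanishes. To handle $\mathbb{E}|\mathcal{R}_n|^2<\infty$ I would first upgrade Assumption \ref{a2} to a uniform bound on the diffusion coefficient along the exact path. Taking $\bar{x}=\bar{y}=0$ in Assumption \ref{a1} gives
\[
\frac{q-1}{2}\|\beta(x,y)-\beta(0,0)\|^2\le K(|x|^2+|y|^2)+|x|\,|\alpha(x,y)-\alpha(0,0)|,
\]
and evaluating at $(x(s),x(s-\tau))$ followed by the Cauchy--Schwarz inequality and Assumption \ref{a2} yields $\sup_{s\in[0,T]}\mathbb{E}\|\beta(x(s),x(s-\tau))\|^2<\infty$. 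With this in hand I split $\mathcal{R}_n$ as in (\ref{Rn}): the drift integral is controlled by the Cauchy--Schwarz inequality in time together with $\sup_s\mathbb{E}|\alpha(x(s),x(s-\tau))|^2<\infty$, while the stochastic integral is controlled by the It\^o isometry together with the just-derived bound on $\mathbb{E}\|\beta\|^2$. This settles the first estimate.

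For the inductive step I assume all listed quantities are finite for every index strictly below $n$, and more strongly that $Z_k$ has finite moments of every order for $k<n$; I then establish $\mathbb{E}|Z_n|^2<\infty$, the $\alpha$-difference bound, and the $\beta$-difference bound, in that order. The crucial step is the first one. Writing the scheme (\ref{bem1}) as $Z_n=\zeta+\Delta\,\alpha(Z_n,Z_{n-M})$ with $\zeta:=Z_{n-1}+\beta(Z_{n-1},Z_{n-1-M})\Delta W_{n-1}$, I take the inner product with $Z_n$ and invoke the one-sided bound (\ref{*}) with $\bar{x}=0$ to linearise the implicit drift, obtaining the a priori estimate $(1-K\Delta)|Z_n|\le|\zeta|+\Delta\,|\alpha(0,Z_{n-M})|$, where $1-K\Delta>0$ is guaranteed by $0<2K\Delta<1$. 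Since $\Delta W_{n-1}$ is independent of $\mathcal{F}_{t_{n-1}}$, the quantity $\zeta$ lies in every $\mathcal{L}^p$, and $\alpha(0,Z_{n-M})$ lies in $\mathcal{L}^2$ by the polynomial growth of $\alpha$ together with the finite moments of the delayed iterate $Z_{n-M}$; hence $\mathbb{E}|Z_n|^2<\infty$, and then $\mathbb{E}|x(t_n)-Z_n|^2\le 2\mathbb{E}|x(t_n)|^2+2\mathbb{E}|Z_n|^2<\infty$.

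The remaining bounds follow mechanically. Rearranging (\ref{bem1}) gives $\alpha(Z_n,Z_{n-M})=\Delta^{-1}\big(Z_n-Z_{n-1}-\beta(Z_{n-1},Z_{n-1-M})\Delta W_{n-1}\big)$, whose right-hand side lies in $\mathcal{L}^2$ by the previous step and the induction hypothesis, so the $\alpha$-difference bound holds; the bound on $[x(t_n)-Z_n]-\Delta[\alpha(x(t_n),x(t_{n-M}))-\alpha(Z_n,Z_{n-M})]$ then follows from the triangle inequality. For the $\beta$-difference I again apply Assumption \ref{a1} (exactly as in the first paragraph, but now at the numerical iterate) to bound $\mathbb{E}\|\beta(Z_n,Z_{n-M})\|^2$ in terms of $\mathbb{E}|Z_n|^2$, $\mathbb{E}|Z_{n-M}|^2$ and $\mathbb{E}|\alpha(Z_n,Z_{n-M})|^2$ via Cauchy--Schwarz, and then multiply by the independent Gaussian increment.

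The main obstacle throughout is the single estimate $\mathbb{E}|Z_n|^2<\infty$: because $\alpha$ grows super-linearly and $Z_n$ is defined only implicitly, a direct estimate is unavailable, and the whole argument hinges on exploiting the coercivity hidden in (\ref{*}) to convert the implicit equation into the explicit a priori estimate above. The independence of the Brownian increment from $\mathcal{F}_{t_{n-1}}$ is the second ingredient that keeps every product term integrable, and the strengthened induction hypothesis of finite moments of all orders for the earlier iterates is what allows the super-linearly growing coefficients to be absorbed at each stage.
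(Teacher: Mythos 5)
Your treatment of $\mathbb{E}|\mathcal{R}_n|^2<\infty$ is fine and close to the paper's: both arguments squeeze an $\mathcal{L}^2$ bound on the diffusion coefficient along the exact path out of Assumption \ref{a1} combined with the two moment bounds in Assumption \ref{a2}, then apply the H\"older inequality and the It\^o isometry. The inductive part, however, has a genuine gap. Your key step is the a priori estimate $(1-K\Delta)|Z_n|\le|\zeta|+\Delta|\alpha(0,Z_{n-M})|$, and you justify $\alpha(0,Z_{n-M})\in\mathcal{L}^2$ by ``the polynomial growth of $\alpha$'' together with finite moments of every order of the earlier iterates. Neither ingredient is available here: Lemma \ref{le2} assumes only Assumptions \ref{a1} and \ref{a2}, while polynomial growth of $\alpha$ is Assumption \ref{a3}, introduced only in Section 3. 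Under Assumption \ref{a1} the map $y\mapsto\alpha(x,y)$ is completely uncontrolled (setting $x=\bar{x}$ in Assumption \ref{a1} yields no information whatsoever about $\alpha(x,y)-\alpha(x,\bar{y})$), so $\mathbb{E}|\alpha(0,Z_{n-M})|^2<\infty$ cannot be deduced once $n>M$ and $Z_{n-M}$ is random. The strengthened hypothesis that $Z_k\in\bigcap_p\mathcal{L}^p$ fails for the same reason: $\zeta\in\mathcal{L}^p$ for all $p$ would require $\beta(Z_{n-1},Z_{n-1-M})\in\mathcal{L}^p$ for all $p$, and Assumption \ref{a1} only bounds $\|\beta(x,y)\|^2$ by an expression involving $|x|\,|\alpha(x,y)|$, which is again uncontrolled without growth conditions.

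The paper deliberately avoids moment bounds for $Z_n$ altogether --- that is the advertised point of its technique. It runs the induction on the compound quantity $\mathbb{E}\big|[x(t_n)-Z_n]-\Delta[\alpha(x(t_n),x(t_{n-M}))-\alpha(Z_n,Z_{n-M})]\big|^2$: its finiteness at step $n$ follows from taking expectations in the identity of Lemma \ref{le1} given finiteness at step $n-1$, while conversely, expanding the square and applying the one-sided condition of Assumption \ref{a1} shows that this compound quantity, plus the (inductively finite) delayed term $2K\Delta\,\mathbb{E}|x(t_{n-1-M})-Z_{n-1-M}|^2$, dominates $(1-2K\Delta)\mathbb{E}|x(t_{n-1})-Z_{n-1}|^2+\Delta^2\mathbb{E}|\alpha(x(t_{n-1}),x(t_{n-1-M}))-\alpha(Z_{n-1},Z_{n-1-M})|^2$. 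Since $1-2K\Delta>0$, the finiteness of the individual pieces is \emph{extracted} from that of the compound quantity, and the $\beta$-difference then follows from Assumption \ref{a1} by the Cauchy--Schwarz inequality. If you wish to salvage your route for $\mathbb{E}|Z_n|^2$, you would at least need to apply Assumption \ref{a1} with $\bar{x}=\bar{y}=0$ (which gives $\langle Z_n,\alpha(Z_n,Z_{n-M})\rangle\le K(|Z_n|^2+|Z_{n-M}|^2)+|Z_n|\,|\alpha(0,0)|$ and removes the problematic term $\alpha(0,Z_{n-M})$) rather than (\ref{*}) with $\bar{x}=0$; but even then the integrability of $\zeta$ beyond $\mathcal{L}^2$ does not close, so the argument should be restructured along the paper's lines.
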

	\begin{proof}
		For all $n\in \{1,2,...,N\}$, $N\in\mathbb{N}$ , we prove $ \mathbb{E}| \mathcal{R}_n|^2<\infty$ firstly. We have already known that $\sup_{s\in[0 , T]}\mathbb{E}|x(s)|^2<\infty$  and 
		$\sup_{s\in[0 , T]}\mathbb{E}|\alpha(x(s),x(s-\tau))|^2<\infty$. By Assumption \ref{a1}, for $s\in[t_{n-1},t_n]$ we can get
		\begin{equation*}	
			\begin{split}	
				& \mathbb{E}\big\|\beta(x(s) , x(s-\tau))-\beta(x(t_{n-1})-x(t_{n-1-M}))\big\|^2\\
				\leq &\frac{2K}{(q-1)}\mathbb{E}\big|x(s)-x(t_{n-1})\big|^2+\frac{2K}{(q-1)}\mathbb{E}\big|x(s-\tau)-x(t_{n-1-M})\big|^2\\
				&-\frac{2}{(q-1)}\mathbb{E}\big\langle x(s)-x(t_{n-1}),\alpha(x(s),x(s-\tau))-\alpha(x(t_{n-1}),x(t_{n-1-M}))\big\rangle\\
				\leq &\frac{2K+1}{(q-1)}\mathbb{E}\big| x(s)-x(t_{n-1})\big|^2+\frac{2K}{(q-1)}\mathbb{E}\big|x(s-\tau)-x(t_{n-1-M})\big|^2\\
				&+\frac{1}{(q-1)} \mathbb{E}\big|\alpha(x(s),x(s-\tau))-\alpha(x(t_{n-1}),x(t_{n-1-M}))\big|^2
				<\infty.
			\end{split}	
		\end{equation*}	
		Relying on the obtained information above, for ${n\in\{1,2,...,N\}}$, $N\in\mathbb{N}$, $ \mathbb{E}| \mathcal{R}_n|^2<\infty $  can be proved due to the It\^{o} isometry. Taking expectations on both sides of (\ref{equ1}), it is easy to get\\
		\begin{equation}\label{equ2}
			\begin{split}
				& \mathbb{E}\big|[x(t_n)-Z_n]-\Delta[\alpha(x(t_n),x(t_{n-M}))-\alpha(Z_n,Z_{n-M})]\big|^2\\
				\leq& 2\mathbb{E}\big|[x(t_{n-1})-Z_{n-1}]-\Delta[\alpha(x(t_{n-1}),x(t_{n-1-M}))-\alpha(Z_{n-1},Z_{n-1-M})]\big|^2\\
				&+2 \mathbb{E}\big\|[\beta(x(t_{n-1}),x(t_{n-1-M}))-\beta(Z_{n-1},Z_{n-1-M})]\Delta W_{n-1}\big\|^2+ 4\mathbb{E}|\mathcal{R}_n|^2\\	
				&+2 \Delta\mathbb{E}\big\langle x(t_{n-1})-Z_{n-1}, \alpha(x(t_{n-1}),x(t_{n-1-M}))-\alpha(Z_{n-1},Z_{n-1-M})\big \rangle\\
				&+2 \mathbb{E}\big\langle [x(t_{n-1})-Z_{n-1}]-\Delta[\alpha(x(t_{n-1}),x(t_{n-1-M}))-\alpha(Z_{n-1},Z_{n-1-M})] ,\\
				&~~~~~~~~~~[\beta(x(t_{n-1}),x(t_{n-1-M}))-\beta(Z_{n-1},Z_{n-1-M})]\Delta W_{n-1}  \big \rangle\\
				&+2\Delta \mathbb{E}\big\langle \alpha(x(t_{n-1}),x(t_{n-1-M}))-\alpha(Z_{n-1},Z_{n-1-M}),\\
				&~~~~~~~~~~~[\beta(x(t_{n-1}),x(t_{n-1-M}))-\beta(Z_{n-1},Z_{n-1-M})]\Delta W_{n-1} \big \rangle.\\
			\end{split}
		\end{equation}	
		Next, we use the inductive reasoning to prove\\   $\mathbb{E}\big| [x(t_n)-Z_n]-\Delta[\alpha(x(t_n),x(t_{n-M}))-\alpha(Z_n,Z_{n-M})]\big|^2<\infty$. One can observe that
		\begin{equation*}	
			\begin{split}
				& [x(0)-Z_0]-\Delta[\alpha(x(0),x(-\tau))-\alpha(Z_0,Z_{-M})]\\
				=&[\alpha(x(0),x(-\tau))-\alpha(Z_0,Z_{-M})]\\
				=&\alpha(x(0),x(-\tau))-\alpha(Z_0,Z_{-M})\\
				=&[\beta(x(0),x(-\tau))-\beta(Z_0,Z_{-M})]\Delta W_0\\
				=&0.
			\end{split}
		\end{equation*}	
		According to (\ref{equ2}), we konw that $\mathbb{E}\big|[x(t_1)-Z_1]-\Delta[\alpha(x(t_1),x(t_{1-M}))-\alpha(Z_1,Z_{1-M})]\big|^2\leq4\mathbb{E}|\mathcal{R}_1|^2<\infty$. Now we suppose that\\
		 $\mathbb{E}\big|[x(t_{n-1})-Z_{n-1}] -\Delta[ \alpha(x(t_{n-1}),x(t_{n-1-M})) -\alpha(Z_{n-1},Z_{n-1-M})]\big|^2<\infty$, then we get from Assumption \ref{a1} that
		\begin{equation*}	
			\begin{split}
				&\mathbb{E}\big|[x(t_{n-1})-Z_{n-1}]-\Delta[\alpha(x(t_{n-1}),x(t_{n-1-M}))-\alpha(Z_{n-1},Z_{n-1-M})]\big|^2\\
				=&\mathbb{E}\big|x(t_{n-1})-Z_{n-1}\big|^2\\
				&-2 \Delta\mathbb{E}\big \langle x(t_{n-1})-Z_{n-1} , \alpha(x(t_{n-1}),x(t_{n-1-M}))-\alpha(Z_{n-1},Z_{n-1-M})  \big \rangle\\
				&+\Delta^2\mathbb{E}\big|\alpha(x(t_{n-1}),x(t_{n-1-M}))-\alpha(Z_{n-1},Z_{n-1-M})\big|^2\\
				\geq&\mathbb{E}\big|x(t_{n-1})-Z_{n-1}\big|^2\\
				&-2 K\Delta \mathbb{E}\big|x(t_{n-1})-Z_{n-1}\big|^2-2 K\Delta \mathbb{E}\big|x(t_{n-1-M})-Z_{n-1-M}\big|^2\\
				&+\Delta^2\mathbb{E}\big|\alpha(x(t_{n-1}),x(t_{n-1-M}))-\alpha(Z_{n-1},Z_{n-1-M})\big|^2.
			\end{split}
		\end{equation*}	
		Thus,
		\begin{equation*}	
			\begin{split}
				&\mathbb{E}\big|[x(t_{n-1})-Z_{n-1}]-\Delta[\alpha(x(t_{n-1}),x(t_{n-1-M}))-\alpha(Z_{n-1},Z_{n-1-M})]\big|^2\\
				&+2 K\Delta \mathbb{E}\big|x(t_{n-1-M})-Z_{n-1-M}\big|^2\\
				\geq&(1-2K\Delta)\mathbb{E}\big|x(t_{n-1})-Z_{n-1}\big|^2\\
				&+\Delta^2\mathbb{E}\big|\alpha(x(t_{n-1}),x(t_{n-1-M}))-\alpha(Z_{n-1},Z_{n-1-M})\big|^2.
			\end{split}
		\end{equation*}	
		Based on the assumptions above, we have
		\begin{equation*}
				\begin{split}	
			\infty>&(1-2K\Delta)\mathbb{E}\big|x(t_{n-1})-Z_{n-1}\big|^2\\
			&+\Delta^2\mathbb{E}\big|\alpha(x(t_{n-1}),x(t_{n-1-M}))-\alpha(Z_{n-1},Z_{n-1-M})\big|^2.
				\end{split}
		\end{equation*}	
		Therefore, for $0\leq2K\varDelta<1$, one can derive that 
		\begin{equation*}	
			\begin{split}
				&\mathbb{E}\big|x(t_{n-1})-Z_{n-1}\big|^2<\infty, \\
				&\mathbb{E}\big|\alpha(x(t_{n-1}),x(t_{n-1-M}))-\alpha(Z_{n-1},Z_{n-1-M})\big|^2<\infty,
			\end{split}
		\end{equation*}
		and
		\begin{equation*}	
			\begin{split}
				& \mathbb{E}\big\|\beta(x(t_{n-1}) , x(t_{n-1-M}))-\beta(Z_{n-1},Z_{n-1-M})\big\|^2\\
				\leq &\frac{2K+1}{(q-1)}\mathbb{E}\big| x(t_{n-1})-Z_{n-1}\big|^2+\frac{2K}{(q-1)}\mathbb{E}\big| x(t_{n-1-M})-Z_{n-1-M}\big|^2\\
				&+\frac{1}{(q-1)} \mathbb{E}\big|\alpha(x(t_{n-1}) , x(t_{n-1-M}))-\alpha(Z_{n-1},Z_{n-1-M})\big|^2
				<\infty.
			\end{split}
		\end{equation*}	
		Moreover, using the properties of Brownian motion and It\^{o}'s isometry gives that 
		\begin{equation*}	
			\begin{split}
				&\mathbb{E}\big|[\beta(x(t_{n-1}),x(t_{n-1-M}))-\beta(Z_{n-1},Z_{n-1-M})]\Delta W_{n-1}\big|^2\\
				=&\Delta\mathbb{E}\big\|\beta(x(t_{n-1}),x(t_{n-1-M}))-\beta(Z_{n-1},Z_{n-1-M})\big\|^2,\\
				& \mathbb{E}\big\langle [x(t_{n-1})-Z_{n-1}]-\Delta[\alpha(x(t_{n-1}),x(t_{n-1-M}))-\alpha(Z_{n-1},Z_{n-1-M})] ,\\
				&~~~~[\beta(x(t_{n-1}),x(t_{n-1-M}))-\beta(Z_{n-1},Z_{n-1-M})]\Delta W_{n-1}  \big \rangle=0,\\
				&\mathbb{E}\big\langle \alpha(x(t_{n-1}),x(t_{n-1-M}))-\alpha(Z_{n-1},Z_{n-1-M}),\\ &~~~~[\beta(x(t_{n-1}),x(t_{n-1-M}))-\beta(Z_{n-1},Z_{n-1-M})]\Delta W_{n-1} \big \rangle=0.
				\end{split}
		\end{equation*}	
		Using Assumption \ref{a1}, we get from (\ref{equ2}) that

		\begin{equation*}	
			\begin{split}
				& \mathbb{E}\big|[x(t_n)-Z_n]-\Delta[\alpha(x(t_n),x(t_{n-M}))-\alpha(Z_n,Z_{n-M})]\big|^2]\\
				\leq& 2\mathbb{E}\big|[x(t_{n-1})-Z_{n-1}]-\Delta[\alpha(x(t_{n-1}),x(t_{n-1-M}))-\alpha(Z_{n-1},Z_{n-1-M})]\big|^2\\
				&+2 \Delta \mathbb{E}\big\|\beta(x(t_{n-1}),x(t_{n-1-M}))-\beta(Z_{n-1},Z_{n-1-M})\big\|^2+ 4\mathbb{E}|\mathcal{R}_n|^2\\
				&+2 \Delta\mathbb{E}\big\langle x(t_{n-1})-Z_{n-1} , \alpha(x(t_{n-1}),x(t_{n-1-M}))-\alpha(Z_{n-1},Z_{n-1-M}) \big \rangle\\
				\leq& 2\mathbb{E}\big|[x(t_{n-1})-Z_{n-1}]-\Delta[\alpha(x(t_{n-1}),x(t_{n-1-M}))-\alpha(Z_{n-1},Z_{n-1-M})]\big|^2\\
				&+2 \Delta \mathbb{E}\big\|\beta(x(t_{n-1}),x(t_{n-1-M}))-\beta(Z_{n-1},Z_{n-1-M})\big\|^2+ 4\mathbb{E}|\mathcal{R}_n|^2\\	
				&+2K\Delta\mathbb{E}\big|x(t_{n-1})-Z_{n-1}\big|^2+2K\Delta\mathbb{E}\big|x(t_{n-1-M})-Z_{n-1-M}\big|^2
				<\infty.
			\end{split}
		\end{equation*}	
		By the induction reasoning, we know that $\mathbb{E}\big|[x(t_n)-Z_n]-\Delta[\alpha(x(t_n),x(t_{n-M}))-\alpha(Z_n,Z_{n-M})]\big|^2<\infty$. Then for
		for all $n\in \{1,2,...,N\}$ , $N\in\mathbb{N}$, the following results can be acquired
		
		\begin{equation*}	
			\mathbb{E}\big| x(t_n)-Z_n\big|^2<\infty,
		\end{equation*}	
		\begin{equation*}	
			\mathbb{E}\big|\alpha(x(t_n),x(t_{n-M}))-\alpha(Z_n,Z_{n-M})\big|^2<\infty,
		\end{equation*}	
		\begin{equation*}	
			\mathbb{E}\big\|\beta(x(t_n),x(t_{n-M}))-\beta(Z_n,Z_{n-M})\big\|^2<\infty.
		\end{equation*}	
	\end{proof}
		With these two lemmas, we are going to prove the following theorem.

	\begin{thm}\label{thm2.5}
		Let Assumptions \ref{a1}, \ref{a2} hold and $0<2K\Delta<1$. Then for all $n\in \{1,2,...,N\}$, $N\in\mathbb{N}$, there is a constant $C$ independent of n , such that
		\begin{equation*}
			\mathbb{E}|x(t_n)-Z_n|^2\leq C\left(\sum_{i=1}^{n}\mathbb{E}|\mathcal{R}_i| ^2 +\Delta^{-1}\sum_{i=1}^{n}\mathbb{E}\big[|\mathbb{E}(\mathcal{R}_i\mid \mathcal{F}_{t_{i-1}})|^2\big]\right),
		\end{equation*}
		where $\mathcal{R}_i$ is defined by (\ref{Rn}).
	\end{thm}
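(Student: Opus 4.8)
The plan is to turn the one-step identity (\ref{equ1}) into a recursion for the mean-square error and then close it by a discrete Gronwall argument. Throughout write $e_n=x(t_n)-Z_n$, let $U_n$ denote the bracketed quantity $[x(t_n)-Z_n]-\Delta[\alpha(x(t_n),x(t_{n-M}))-\alpha(Z_n,Z_{n-M})]$ on the left of (\ref{equ1}), and abbreviate the drift and diffusion increments by $\delta\alpha_{n-1}$ and $\delta\beta_{n-1}$, so that $U_n=U_{n-1}+\Delta\delta\alpha_{n-1}+\delta\beta_{n-1}\Delta W_{n-1}+\mathcal{R}_n$. First I would take expectations in (\ref{equ1}). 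As already observed in the proof of Lemma \ref{le2}, the two inner products that are linear in $\Delta W_{n-1}$ while the other factor is $\mathcal{F}_{t_{n-1}}$-measurable have zero mean, since $\Delta W_{n-1}$ is independent of $\mathcal{F}_{t_{n-1}}$; Lemma \ref{le2} also guarantees that every expectation below is finite, so all manipulations are legitimate.

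Next I would estimate the surviving terms, and this is where Assumption \ref{a1} is used decisively. I group the drift inner product $2\Delta\mathbb{E}\langle e_{n-1},\delta\alpha_{n-1}\rangle$, the diffusion term $\Delta\mathbb{E}\|\delta\beta_{n-1}\|^2$ (from $\mathbb{E}|\delta\beta_{n-1}\Delta W_{n-1}|^2=\Delta\mathbb{E}\|\delta\beta_{n-1}\|^2$), and the cross term $2\mathbb{E}\langle\delta\beta_{n-1}\Delta W_{n-1},\mathcal{R}_n\rangle$. Applying Young's inequality to the last one with weight $q-2>0$ yields $(q-2)\Delta\mathbb{E}\|\delta\beta_{n-1}\|^2+\tfrac{1}{q-2}\mathbb{E}|\mathcal{R}_n|^2$; the diffusion contributions then add up to exactly $(q-1)\Delta\mathbb{E}\|\delta\beta_{n-1}\|^2$, so that together with $2\Delta\mathbb{E}\langle e_{n-1},\delta\alpha_{n-1}\rangle$ we obtain precisely $2\Delta$ times the left-hand side of Assumption \ref{a1}, hence a bound by $2K\Delta(\mathbb{E}|e_{n-1}|^2+\mathbb{E}|e_{n-1-M}|^2)$. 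The remaining two cross terms are treated by conditioning: since $U_{n-1}$ and $\delta\alpha_{n-1}$ are $\mathcal{F}_{t_{n-1}}$-measurable, $\mathbb{E}\langle U_{n-1},\mathcal{R}_n\rangle=\mathbb{E}\langle U_{n-1},\mathbb{E}(\mathcal{R}_n\mid\mathcal{F}_{t_{n-1}})\rangle$, and likewise for $\delta\alpha_{n-1}$. A weighted Young inequality gives $\Delta\mathbb{E}|U_{n-1}|^2+\Delta^{-1}\mathbb{E}|\mathbb{E}(\mathcal{R}_n\mid\mathcal{F}_{t_{n-1}})|^2$ for the first, while for the second the weight is chosen so that the resulting $\Delta^2\mathbb{E}|\delta\alpha_{n-1}|^2$ cancels the term $-\Delta^2\mathbb{E}|\delta\alpha_{n-1}|^2$ already present in (\ref{equ1}). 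This cancellation is the crux: it removes the only place a moment bound on $\delta\alpha_{n-1}$ (hence on the numerical solution) would have been needed, explaining why no moment estimates are required. Collecting everything yields
\begin{equation*}
\mathbb{E}|U_n|^2\le(1+\Delta)\mathbb{E}|U_{n-1}|^2+2K\Delta\big(\mathbb{E}|e_{n-1}|^2+\mathbb{E}|e_{n-1-M}|^2\big)+C\mathbb{E}|\mathcal{R}_n|^2+C\Delta^{-1}\mathbb{E}\big[|\mathbb{E}(\mathcal{R}_n\mid\mathcal{F}_{t_{n-1}})|^2\big].
\end{equation*}

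Finally I would pass from $U_n$ back to $e_n$ and run the Gronwall argument. Expanding $|U_n|^2=|e_n|^2-2\Delta\langle e_n,\delta\alpha_n\rangle+\Delta^2|\delta\alpha_n|^2$ and bounding $\langle e_n,\delta\alpha_n\rangle\le K(|e_n|^2+|e_{n-M}|^2)$ by Assumption \ref{a1} gives $\mathbb{E}|U_n|^2\ge(1-2K\Delta)\mathbb{E}|e_n|^2-2K\Delta\mathbb{E}|e_{n-M}|^2$, so that, since $0<2K\Delta<1$, each $\mathbb{E}|e_n|^2$ is controlled by $\mathbb{E}|U_n|^2$ up to a delayed error. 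Substituting this into the recursion keeps the coefficient of $\mathbb{E}|U_{n-1}|^2$ at $1+O(\Delta)$ and leaves only $O(\Delta)$ coefficients on the delayed errors $\mathbb{E}|e_{n-1-M}|^2$. Because $U_0=0$ and $e_j=0$ for $j\le0$ (numerical and exact data coincide on the initial segment), iterating, using $(1+O(\Delta))^n\le e^{O(T)}$, and absorbing the delay via $\sum_{i=1}^n\mathbb{E}|e_{i-1-M}|^2\le\sum_{j=1}^n\mathbb{E}|e_j|^2$ reduces the estimate to a scalar inequality $\mathbb{E}|e_n|^2\le A+c\Delta\sum_{j=1}^n\mathbb{E}|e_j|^2$ with $A=C\big(\sum_{i=1}^n\mathbb{E}|\mathcal{R}_i|^2+\Delta^{-1}\sum_{i=1}^n\mathbb{E}|\mathbb{E}(\mathcal{R}_i\mid\mathcal{F}_{t_{i-1}})|^2\big)$, and the discrete Gronwall inequality then gives the claim with $C$ depending only on $T$, $K$ and $q$.

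The step I expect to be the main obstacle is the term $2\mathbb{E}\langle\delta\beta_{n-1}\Delta W_{n-1},\mathcal{R}_n\rangle$. Unlike the other Brownian cross terms it has nonzero mean, because both factors depend on the increment over $[t_{n-1},t_n]$, and it involves the super-linearly growing diffusion, so it cannot be discarded or crudely bounded. Making it fit forces the precise weight $q-2$ in Young's inequality, so that it is absorbed by the surplus hidden in the $\tfrac{q-1}{2}\|\beta(x,y)-\beta(\bar{x},\bar{y})\|^2$ term of Assumption \ref{a1}; this is exactly where the hypothesis $q>2$ is consumed. A secondary difficulty is the accounting of the delay terms $e_{n-M}$, appearing both in the $U_n$-to-$e_n$ conversion and in the Gronwall step, which is resolved by the vanishing of the error on the initial segment.
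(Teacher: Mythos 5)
Your proposal is correct and follows essentially the same route as the paper: take expectations in (\ref{equ1}), drop the martingale cross terms, use Young's inequality with weight $q-2$ on $2\mathbb{E}\langle\delta\beta_{n-1}\Delta W_{n-1},\mathcal{R}_n\rangle$ to assemble the $(q-1)$-weighted combination of Assumption \ref{a1}, condition the remaining $\mathcal{R}_n$ cross terms, bound $\mathbb{E}|U_n|^2$ below by $(1-2K\Delta)\mathbb{E}|e_n|^2-2K\Delta\mathbb{E}|e_{n-M}|^2$, and close with the discrete Gronwall inequality. The only (cosmetic) deviation is that you absorb $2\Delta\mathbb{E}\langle\delta\alpha_{n-1},\mathcal{R}_n\rangle$ by cancelling against $-\Delta^2\mathbb{E}|\delta\alpha_{n-1}|^2$, whereas the paper simply combines it with $2\mathbb{E}\langle U_{n-1},\mathcal{R}_n\rangle$ into $2\mathbb{E}\langle e_{n-1},\mathcal{R}_n\rangle$ and discards the negative square term.
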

	
	\begin{proof} 
		By Lemma $\ref{le1}$ and the H\"older inequality, we derive that
		\begin{equation*}
			\begin{split}
				& \mathbb{E}\big|[x(t_n)-Z_n]-\Delta[\alpha(x(t_n),x(t_{n-M}))-\alpha(Z_n,Z_{n-M})]\big|^2\\
				\leq&\mathbb{E}\big|[x(t_{n-1})-Z_{n-1}]-\Delta[\alpha(x(t_{n-1}),x(t_{n-1-M}))-\alpha(Z_{n-1},Z_{n-1-M})]\big|^2\\
				&+ \Delta\mathbb{E}\big\|\beta(x(t_{n-1}),x(t_{n-1-M}))-\beta(Z_{n-1},Z_{n-1-M})\big\|^2+ \mathbb{E}|\mathcal{R}_n|^2\\	
				&+2 \Delta\mathbb{E}\big\langle x(t_{n-1})-Z_{n-1} , \alpha(x(t_{n-1}),x(t_{n-1-M}))-\alpha(Z_{n-1},Z_{n-1-M}) \big \rangle\\
				&+2\mathbb{E}\big\langle[x(t_{n-1})-Z_{n-1}]-\Delta[\alpha(x(t_{n-1}),x(t_{n-1-M}))-\alpha(Z_{n-1},Z_{n-1-M})] , \mathcal{R}_n \big \rangle\\ 
				&+2\Delta \mathbb{E}\big\langle \alpha(x(t_{n-1}),x(t_{n-1-M}))-\alpha(Z_{n-1},Z_{n-1-M}) ,\mathcal{R}_n   \big \rangle\\
				&+(q-2)\mathbb{E}\big\|[\beta(x(t_{n-1}),x(t_{n-1-M}))-\beta(Z_{n-1},Z_{n-1-M})]\Delta W_{n-1}\big\|^2\\
				&+\frac{1}{q-2}\mathbb{E}|\mathcal{R}_n|^2\\
				=& 
				\mathbb{E}\big|[x(t_{n-1})-Z_{n-1}]-[\alpha(x(t_{n-1}),x(t_{n-1-M}))-\alpha(Z_{n-1},Z_{n-1-M})]\big|^2\\
				&+2 \Delta\mathbb{E}\big \langle x(t_{n-1})-Z_{n-1} , \alpha(x(t_{n-1}),x(t_{n-1-M}))-\alpha(Z_{n-1},Z_{n-1-M})  \big \rangle\\
				&+(q-1)\Delta\mathbb{E}\big|\beta(x(t_{n-1}),x(t_{n-1-M}))-\beta(Z_{n-1},Z_{n-1-M})\big|^2+\frac{q-1}{q-2}\mathbb{E}|\mathcal{R}_n|^2\\
				&+2\mathbb{E}\big\langle x(t_{n-1})-Z_{n-1} , \mathcal{R}_n \big \rangle.\\ 
			\end{split}
		\end{equation*}
		Using Assumption \ref{a1} and the properties of conditional expectation leads to
		\begin{equation*}
			\begin{split}
				\mathbb{E}&\big|[x(t_n)-Z_n]-\Delta[\alpha(x(t_n),x(t_{n-M}))-\alpha(Z_n,Z_{n-M})]\big|^2\\
				\leq&\mathbb{E}\big|[x(t_{n-1})-Z_{n-1}]-\Delta[\alpha(x(t_{n-1}),x(t_{n-1-M}))-\alpha(Z_{n-1},Z_{n-1-M})]\big|^2\\
				&+2K\Delta\mathbb{E}\big|x(t_{n-1})-Z_{n-1}\big|^2+2K\Delta\mathbb{E}\big|x(t_{n-1-M})-Z_{n-1-M}\big|^2\\
				&+\Delta\mathbb{E}\big|x(t_{n-1})-Z_{n-1}\big|^2+\frac{1}{\Delta}\mathbb{E}\big[|\mathbb{E}(\mathcal{R}_i\mid \mathcal{F}_{t_{i-1}})|^2\big]+\frac{q-1}{q-2}\mathbb{E}|\mathcal{R}_n|^2.
			\end{split}
		\end{equation*}
		By iterating and $\mathbb{E}\big|[x(t_0)-Z_0]-\Delta[\alpha(x(t_0),x(-\tau))-\alpha(Z_0,Z_{-M})]\big|=0$, we have
		\begin{equation*}
			\begin{split}
				&\mathbb{E}\big|[x(t_n)-Z_n]-\Delta[\alpha(x(t_n),x(t_{n-M}))-\alpha(Z_n,Z_{n-M})]\big|^2\\
				\leq&\mathbb{E}\big|[x(t_0)-Z_0]-\Delta[\alpha(x(t_0),x(-\tau))-\alpha(Z_0,Z_{-M})]\big|^2\\
				&+(2K+1)\Delta\sum_{i=0}^{n-1}\mathbb{E}\big|x(t_i)-Z_i\big|^2+2K\Delta\sum_{i=0}^{n-1}\mathbb{E}\big|x(t_{i-M})-Z_{i-M}\big|^2\\
				&+\frac{1}{\Delta}\sum_{i=1}^{n}\mathbb{E}\big[|\mathbb{E}(\mathcal{R}_i\mid \mathcal{F}_{t_{i-1}})|^2\big]+\frac{q-1}{q-2}\sum_{i=1}^{n}\mathbb{E}|\mathcal{R}_i| ^2\\
				\leq&(2K+1)\Delta\sum_{i=0}^{n-1}\mathbb{E}\big|x(t_i)-Z_i\big|^2+2K\Delta\sum_{i=-M}^{n-1-M}\mathbb{E}\big|x(t_i)-Z_i\big|^2\\
				&+\frac{1}{\Delta}\sum_{i=1}^{n}\mathbb{E}\big[|\mathbb{E}(\mathcal{R}_i\mid \mathcal{F}_{t_{i-1}})|^2\big]+\frac{q-1}{q-2}\sum_{i=1}^{n}\mathbb{E}|\mathcal{R}_i| ^2,\\
			\end{split}
		\end{equation*}
		where we used the fact that
		$
			\sum_{i=0}^{n-1}\mathbb{E}\big|x(t_{i-M})-Z_{i-M}\big|^2= \sum_{i=-M}^{n-1-M}\mathbb{E}\big|x(t_i)-Z_i\big|^2.
	$
		Using the Assumption \ref{a1} leads to
		
		\begin{equation*}
			\begin{split}
				&\mathbb{E}\big|[x(t_n)-Z_n]-\Delta[\alpha(x(t_n),x(t_{n-M}))-\alpha(Z_n,Z_{n-M})]\big|^2\\
				=&\mathbb{E}\big|x(t_n)-Z_n\big|^2-2\Delta\mathbb{E}\big\langle x(t_n)-Z_n , \alpha(x(t_n),x(t_{n-M}))-\alpha(Z_n,Z_{n-M})\big\rangle\\
				&+\Delta^2\mathbb{E}\big|\alpha(x(t_n),x(t_{n-M}))-\alpha(Z_n,Z_{n-M})\big|^2\\
				\geq&(1-2K\Delta)\mathbb{E}\big|x(t_n)-Z_n\big|^2-2K\Delta\mathbb{E}\big|x(t_{n-M})-Z_{n-M}\big|^2.
			\end{split}
		\end{equation*}
		Combining these inequalities yields that
		
		\begin{equation*} 
			\begin{split}
				&(1-2K\Delta)\mathbb{E}\big|x(t_n)-Z_n\big|^2\\
				\leq
				&(2K+1)\Delta\sum_{i=0}^{n-1}\mathbb{E}\big|x(t_i)-Z_i\big|^2+2K\Delta\sum_{i=-M}^{-1}\mathbb{E}\big|x(t_i)-Z_i\big|^2\\
				&+2K\Delta\sum_{i=0}^{n-1-M}\mathbb{E}\big|x(t_i)-Z_i\big|^2+2K\Delta\mathbb{E}\big|x(t_{n-M})-Z_{n-M}\big|^2\\
				&+\frac{1}{\Delta} \sum_{i=1}^{n}\mathbb{E}\big[|\mathbb{E}(\mathcal{R}_i\mid \mathcal{F}_{t_{i-1}})|^2\big]+\frac{q-1}{q-2}\sum_{i=1}^{n}\mathbb{E}|\mathcal{R}_i| ^2\\
				\leq
				&(2K+1)\Delta\sum_{i=0}^{n-1}\mathbb{E}\big|x(t_i)-Z_i\big|^2+2K\Delta\sum_{i=0}^{n-M}\mathbb{E}\big|x(t_i)-Z_i\big|^2\\
				&+\frac{1}{\Delta} \sum_{i=1}^{n}\mathbb{E}\big[|\mathbb{E}(\mathcal{R}_i\mid \mathcal{F}_{t_{i-1}})|^2\big]+\frac{q-1}{q-2}\sum_{i=1}^{n}\mathbb{E}|\mathcal{R}_i| ^2\\
				\leq
				&(4K+1)\Delta\sum_{i=0}^{n-1}\mathbb{E}\big|x(t_i)-Z_i\big|^2+\frac{1}{\Delta} \sum_{i=1}^{n}\mathbb{E}\big[|\mathbb{E}(\mathcal{R}_i\mid \mathcal{F}_{t_{i-1}})|^2\big]+\frac{q-1}{q-2}\sum_{i=1}^{n}\mathbb{E}|\mathcal{R}_i| ^2.\\
			\end{split}
		\end{equation*}
		By the Gronwall inequality, we get the desired result.
		
	\end{proof} 
	
	\section{Strong convergence rate}
	The strong convergence rate of BEM for SDDE with super-linear coefficients is dicussed in this section.
	In order to analyze the convergence rate, we need to make  additional assumptions.
	\begin{ass} \label{a3}
		Let Assumption \ref{a1} hold, and there are several constants $\rho\in[1,\infty)$, $\bar{p}\in[4\rho-2 , \infty)$, $K_1\in(0,\infty)$ such that
		\begin{equation*} \label{equ4}
			\big|\alpha(x,y)-\alpha(\bar{x},\bar{y})\big|\leq K_{1}	\big(1+|x|^{\rho-1}+|y|^{\rho-1}+|\bar{x}|^{\rho-1}+|\bar{y}|^{\rho-1}	\big)\big(|x-\bar{x}|+|y-\bar{y}|	\big)
		\end{equation*}
		\begin{equation*}\label{key}
			\big\langle x , \alpha(x,y)	\big\rangle+\frac{(\bar{p}-1)}{2}	\big\|\beta(x,y)\big\|^{2}\leq K_{1}\big(1+|x|^{2}+|y|^{2}\big),
		\end{equation*}
		for any $x,y,\bar{x},\bar{y}\in \mathbb{R}^d$.
		
		
	\end{ass}
	By Young's inequality, Assumption \ref{a3} leads to
	\begin{equation} \label{sss}
		\begin{split}
		&\|\beta(x,y)-\beta(\bar{x},\bar{y})\|^2\leq C(1+|x|^{\rho-1}+|y|^{\rho-1}+|\bar{x}|^{\rho-1}+|\bar{y}|^{\rho-1})(|x-\bar{x}|^2+|y-\bar{y}|^2),
		\end{split}
	\end{equation}
\begin{equation*}
	|(\alpha(x,y)|\leq C\left( 1+|x|^{\rho}+|y|^{\rho}\right),
\end{equation*}
	and
	\begin{equation} \label{ssss}
		\|(\beta(x,y)\|^2\leq C\left (1+|x|^{\rho+1}+|y|^{\rho+1} \right),
	\end{equation}
	for any $x,y,\bar{x},\bar{y}\in \mathbb{R}^d$.

	It is worth noting that Assumption \ref{a3} suffices to imply Assumption \ref{a2}, which means that SDDE (\ref{sdde1}) admits a unique solution satisfying $\sup_{s\in[0,T]}\mathbb{E}|x(s)|^2<\infty$, 
	$\sup_{s\in[0 , T]}\mathbb{E}\big|\alpha(x(s),x(s-\tau))\big|^2<\infty$. Moreover, under Assumption \ref{a3}, for any $p\in[2 , \bar{p}]$, 
	the exact solution of SDDE (\ref{sdde1}) with the initial data (\ref{initial}) satisfies
	\begin{equation*} 
		\sup_{t\in[0 , T]}\mathbb{E}|x(t)|^p<\infty.
	\end{equation*}
	
	\begin{ass}\label{ass3.2}
		There exists a constant $K_2> 0 $ such that the initial value $\varphi$ satisfies
		\begin{equation*} 
			|\varphi({\theta_2})-\varphi({\theta_1})| \leq K_2|{\theta_2}-{\theta_1}|^\frac{1}{2},~~~~~~~-\tau\leq{\theta_1}<{\theta_2}\leq0.
		\end{equation*}
	\end{ass}

	\begin{lem}
		Let Assumption \ref{a3} hold. For any $0\leq t_1<t_2\leq T$ , we can derive that
		\begin{equation*} 
			\left( \mathbb{E}\left|x(t_2)-x(t_1)\right|^\gamma\right)^\frac{1} {\gamma}\leq C(t_2-t_1)^\frac{1}{2}, ~~~~~\forall\gamma\in[2,\bar{p}/\rho].
		\end{equation*}
	\end{lem}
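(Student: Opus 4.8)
The plan is to write the increment as the sum of a drift integral and a stochastic integral, estimate each in $L^\gamma$ separately, and then recombine. For $0\le t_1<t_2\le T$, the integral form of (\ref{sdde1}) gives
\[
x(t_2)-x(t_1)=\int_{t_1}^{t_2}\alpha(x(s),x(s-\tau))\,ds+\int_{t_1}^{t_2}\beta(x(s),x(s-\tau))\,dW(s),
\]
so by the elementary inequality $|a+b|^\gamma\le 2^{\gamma-1}(|a|^\gamma+|b|^\gamma)$ it suffices to bound the two resulting expectations. Throughout I would use the moment bound $\sup_{t\in[-\tau,T]}\mathbb{E}|x(t)|^p<\infty$ for every $p\in[2,\bar{p}]$, which holds on $[0,T]$ by the moment estimate recorded after Assumption \ref{a3} and holds on $[-\tau,0]$ because the continuous initial datum $\varphi$ is bounded.

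For the drift term I would apply the H\"older inequality in time to get
\[
\mathbb{E}\Big|\int_{t_1}^{t_2}\alpha(x(s),x(s-\tau))\,ds\Big|^\gamma\le (t_2-t_1)^{\gamma-1}\int_{t_1}^{t_2}\mathbb{E}\big|\alpha(x(s),x(s-\tau))\big|^\gamma\,ds,
\]
and then invoke the growth bound $|\alpha(x,y)|\le C(1+|x|^\rho+|y|^\rho)$ from Assumption \ref{a3}. This produces moments of order $\rho\gamma$, which are finite and uniformly bounded precisely because $\gamma\le\bar{p}/\rho$ forces $\rho\gamma\le\bar{p}$. Hence the drift contribution is at most $C(t_2-t_1)^{\gamma}$.

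For the diffusion term I would invoke the Burkholder--Davis--Gundy inequality followed by H\"older's inequality (using $\gamma/2\ge1$),
\[
\mathbb{E}\Big|\int_{t_1}^{t_2}\beta(x(s),x(s-\tau))\,dW(s)\Big|^\gamma\le C\,(t_2-t_1)^{\gamma/2-1}\int_{t_1}^{t_2}\mathbb{E}\big\|\beta(x(s),x(s-\tau))\big\|^\gamma\,ds,
\]
and then use (\ref{ssss}), i.e. $\|\beta(x,y)\|^2\le C(1+|x|^{\rho+1}+|y|^{\rho+1})$, which leaves moments of order $(\rho+1)\gamma/2$. The key check here is that $(\rho+1)\gamma/2\le\bar{p}$: since $\rho\ge1$ one has $(\rho+1)/2\le\rho$, so $(\rho+1)\gamma/2\le\rho\gamma\le\bar{p}$, and the moments stay within the admissible range. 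Thus the diffusion contribution is at most $C(t_2-t_1)^{\gamma/2}$.

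Combining the two bounds gives $\mathbb{E}|x(t_2)-x(t_1)|^\gamma\le C(t_2-t_1)^{\gamma}+C(t_2-t_1)^{\gamma/2}$; since $t_2-t_1\le T$ the higher power is absorbed into the lower, yielding $\mathbb{E}|x(t_2)-x(t_1)|^\gamma\le C(t_2-t_1)^{\gamma/2}$, and taking $\gamma$-th roots gives the claim. The only genuinely delicate point is the bookkeeping of moment exponents: both the drift and the diffusion generate higher-order moments of the solution, and the restriction $\gamma\in[2,\bar{p}/\rho]$ together with $\rho\ge1$ is exactly what keeps every exponent inside $[2,\bar{p}]$ where the a priori moment bound is available. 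One must also remember to treat the delayed argument $x(s-\tau)$ for $s<\tau$ via the bounded initial datum rather than the solution's moment bound.
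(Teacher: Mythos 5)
Your proposal is correct and follows essentially the same route as the paper: split the increment into drift and diffusion parts via $|a+b|^\gamma\le 2^{\gamma-1}(|a|^\gamma+|b|^\gamma)$, apply H\"older in time to the drift and a moment inequality for It\^o integrals (the paper cites Theorem 1.7.1 of Mao, which plays the role of your Burkholder--Davis--Gundy plus H\"older step) to the diffusion, and then use the polynomial growth bounds from Assumption \ref{a3} together with the a priori moment bound $\sup_{t\in[0,T]}\mathbb{E}|x(t)|^p<\infty$ for $p\in[2,\bar p]$. Your explicit verification that the exponents $\gamma\rho$ and $(\rho+1)\gamma/2$ stay within $[2,\bar p]$, and your remark about handling $x(s-\tau)$ via the bounded initial datum when $s<\tau$, are details the paper leaves implicit but are exactly right.
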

	
	\begin{proof}
		Using an elementary inequality $|a+b|^\gamma \leq 2^{\gamma-1}(|a|^\gamma+|b|^\gamma)$ for any $a,b\in\mathbb{R}^d$, it is easy to get that
		
		\begin{equation*}
			\begin{split}
				&\mathbb{E}\left|x(t_2)-x(t_1)\right|^\gamma\\
				\leq &2^{\gamma-1}\mathbb{E}\left| \int_{t_1}^{t_2}\alpha\left(x(s), x(s-\tau)\right) ds\right| ^\gamma+2^{\gamma-1}\mathbb{E}\left| \int_{t_1}^{t_2}\beta\left(x(s), x(s-\tau)\right) dW(s)\right| ^\gamma.
			\end{split}
		\end{equation*}
		Then by	Assumption \ref{a3}, H\"older's inequality and Theorem 1.7.1 in \cite{18}, one can derive that 
		
		\begin{equation*}
			\begin{split}
				&\mathbb{E}\left|x(t_2)-x(t_1)\right|^\gamma\\
				\leq& [2(t_2-t_1)]^{\gamma-1}\mathbb{E} \int_{t_1}^{t_2}|\alpha\left(x(s), x(s-\tau)\right)|^\gamma ds\\ &+\frac{1}{2}[2\gamma(\gamma-1)]^\frac{\gamma}{2}(t_2-t_1)^\frac{\gamma-2}{2}\mathbb{E}\int_{t_1}^{t_2}|\beta\left(x(s), x(s-\tau)\right)|^\gamma ds\\
				\leq&C(t_2-t_1)^{\gamma-1}\int_{t_1}^{t_2}\mathbb{E}\left( 1+|x(s)|^{\gamma\rho}+|x(s-\tau)|^{\gamma\rho}\right) ds\\
				&+C(t_2-t_1)^\frac{\gamma-2}{2}\int_{t_1}^{t_2}\mathbb{E}\left (1+|x(s)|^\frac{\gamma(\rho+1)}{2}+|x(s-\tau)|^\frac{\gamma(\rho+1)}{2} \right) ds\\
				\leq&C(t_2-t_1)^\frac{\gamma}{2}.
			\end{split}
		\end{equation*}
	\end{proof}
	\begin{thm} \label{thm3.2}
		Let Assumption \ref{a3} hold and $\Delta\in(0 , \frac{1}{2K})$. Then for the exact solution x(t) to SDDE(\ref{sdde1}) and the numerical solution $Z_n$ to BEM(\ref{bem1})
		, there is a constant $C$ independent of n, $\Delta$ such that
		\begin{equation*} 
			\sup_{0\leq n \leq N}\mathbb{E}|x(t_n)-Z_n|^2\leq C\Delta.
		\end{equation*}
	\end{thm}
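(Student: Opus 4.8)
The plan is to feed the local residuals $\mathcal{R}_i$ from \eqref{Rn} into the global error bound of Theorem \ref{thm2.5}, so that everything reduces to two per-step estimates: a ``strong'' bound $\mathbb{E}|\mathcal{R}_i|^2\leq C\Delta^2$ and a sharper ``weak'' bound $\mathbb{E}[|\mathbb{E}(\mathcal{R}_i\mid\mathcal{F}_{t_{i-1}})|^2]\leq C\Delta^3$. Granting these, Theorem \ref{thm2.5} yields $\mathbb{E}|x(t_n)-Z_n|^2\leq C\big(\sum_{i=1}^n C\Delta^2+\Delta^{-1}\sum_{i=1}^n C\Delta^3\big)\leq C\big(N\Delta^2+\Delta^{-1}N\Delta^3\big)$, and since $N\Delta=T$ this is $\leq C\Delta$ uniformly in $n$. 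Thus the whole argument rests on the two local estimates, and the role of the $\Delta^{-1}$-weighted sum in Theorem \ref{thm2.5} is precisely to accommodate the difference between the strong and weak local orders.

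For the strong bound I would split $\mathcal{R}_i$ into its drift integral and its It\^o integral and treat them separately. For the diffusion part, the It\^o isometry converts $\mathbb{E}|\int_{t_{i-1}}^{t_i}[\beta(x(s),x(s-\tau))-\beta(x(t_{i-1}),x(t_{i-1-M}))]dW(s)|^2$ into $\int_{t_{i-1}}^{t_i}\mathbb{E}\|\beta(x(s),x(s-\tau))-\beta(x(t_{i-1}),x(t_{i-1-M}))\|^2\,ds$; bounding the integrand through the polynomial Lipschitz estimate \eqref{sss} and then through the continuity Lemma preceding this theorem gives $\mathbb{E}\|\cdots\|^2\leq C|s-t_{i-1}|\leq C\Delta$, so the diffusion contribution is $O(\Delta^2)$. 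The drift part, handled by the Cauchy--Schwarz inequality on $[t_{i-1},t_i]$ together with the polynomial local Lipschitz bound in Assumption \ref{a3}, is of even smaller order $O(\Delta^3)$. Summing over the $N=T/\Delta$ steps gives $\sum_i\mathbb{E}|\mathcal{R}_i|^2=O(\Delta)$.

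The sharper conditional bound is where the structure of Theorem \ref{thm2.5} is exploited. The essential observation is that the It\^o-integral part of $\mathcal{R}_i$ is a martingale increment over $[t_{i-1},t_i]$, hence $\mathbb{E}(\cdot\mid\mathcal{F}_{t_{i-1}})=0$; only the drift integral survives the conditioning. Applying the conditional Jensen inequality and then Cauchy--Schwarz I obtain $\mathbb{E}[|\mathbb{E}(\mathcal{R}_i\mid\mathcal{F}_{t_{i-1}})|^2]\leq\mathbb{E}|\int_{t_{i-1}}^{t_i}[\alpha(x(s),x(s-\tau))-\alpha(x(t_i),x(t_{i-M}))]\,ds|^2\leq\Delta\int_{t_{i-1}}^{t_i}\mathbb{E}|\alpha(x(s),x(s-\tau))-\alpha(x(t_i),x(t_{i-M}))|^2\,ds$, which is $O(\Delta^3)$ by the same local-Lipschitz-plus-continuity argument. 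After the $\Delta^{-1}$ weighting and summation this term is again $O(\Delta)$, and combining with the previous paragraph finishes the estimate.

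I expect the main obstacle to be the estimate $\mathbb{E}|\alpha(\cdots)-\alpha(\cdots)|^2\leq C\Delta$ (and its $\beta$ analogue), because the polynomial growth factor $1+|x(s)|^{\rho-1}+\cdots$ from Assumption \ref{a3} must be separated from the time increment by H\"older's inequality: writing $\tfrac1p+\tfrac1{p'}=1$, the growth factor is controlled by $\sup_s\mathbb{E}|x(s)|^{p}<\infty$ for $p\leq\bar p$, while the increment is controlled, via the preceding Lemma, by $(\mathbb{E}|x(s)-x(t_i)|^{2p'})^{1/p'}\leq C|s-t_i|$ provided $2p'\leq\bar p/\rho$. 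Balancing the two moment requirements forces precisely $\bar p\geq4\rho-2$, which is exactly the standing hypothesis of Assumption \ref{a3}; this is the step that must be executed with care. A minor additional point is that the delayed increments $x(s-\tau)-x(t_{i-M})$ have to be handled by splitting at the origin and invoking the $\tfrac12$-H\"older regularity of the initial data from Assumption \ref{ass3.2} whenever the arguments fall into $[-\tau,0]$.
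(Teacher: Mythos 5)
Your proposal is correct and follows essentially the same route as the paper: it invokes Theorem \ref{thm2.5}, establishes the per-step bounds $\mathbb{E}|\mathcal{R}_i|^2\leq C\Delta^2$ (drift part $O(\Delta^3)$ via H\"older and Assumption \ref{a3}, diffusion part $O(\Delta^2)$ via the It\^o isometry) and $\mathbb{E}\big[|\mathbb{E}(\mathcal{R}_i\mid\mathcal{F}_{t_{i-1}})|^2\big]\leq C\Delta^3$ (using that the martingale increment vanishes under conditioning), and combines them exactly as the paper does. Your remarks on balancing the H\"older exponents against $\bar p\geq 4\rho-2$ and on invoking Assumption \ref{ass3.2} for the delayed increments match the paper's use of the preceding continuity lemma and the initial-data regularity.
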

	
	\begin{proof}
		According to Theorem \ref{thm2.5}, to obtain the convergence rate, what we need to do is estimating two terms $\mathbb{E}[|\mathbb{E}(\mathcal{R}_i\mid \mathcal{F}_{t_{i-1}})|^2]$ and $\mathbb{E}|\mathcal{R}_i|^2$, $i\in\{1,2,...,N\}$. 
		By (\ref{Rn}), we have
		\begin{equation*}
			\begin{split}
				\mathbb{E}|\mathcal{R}_i|^2 &= \mathbb{E}\Big| \int_{t_{i-1}}^{t_i} [\alpha(x(s) , x(s-\tau))-\alpha(x(t_i)-x(t_{i-M}))]ds\\
				&+ \int_{t_{i-1}}^{t_i} [\beta(x(s) , x(s-\tau))-\beta(x(t_{i-1})-x(t_{i-1-M}))]dW(s)\Big| ^2\\
				\leq&2\mathbb{E}\Big| \int_{t_{i-1}}^{t_i} [\alpha(x(s) , x(s-\tau))-\alpha(x(t_i)-x(t_{i-M}))]ds\Big|^2\\
				&+ 2\mathbb{E}\Big| \int_{t_{i-1}}^{t_i} [\beta(x(s) , x(s-\tau))-\beta(x(t_{i-1})-x(t_{i-1-M}))]dW(s)\Big| ^2.\\
			\end{split}
		\end{equation*}
		Now we estimate the first term on the right side of the inequality. By the H\"older inequality and Assumptions \ref{a3}, \ref{ass3.2}, we can get
		\begin{equation*}
			\begin{split}
				&\mathbb{E}\left| \int_{t_{i-1}}^{t_i} [\alpha(x(s) , x(s-\tau))-\alpha(x(t_i)-x(t_{i-M}))]ds\right| ^2\\
				\leq& C\Delta\mathbb{E}\int_{t_{i-1}}^{t_i} \left| \alpha(x(s) , x(s-\tau))-\alpha(x(t_i)-x(t_{i-M}))\right|^2 ds\\ 
				\leq&C\Delta\int_{t_{i-1}}^{t_i} \mathbb{E} \Big[ (1+|x(s)|^{2\rho-2}+|x(s-\tau)|^{2\rho-2}+|x(t_i)|^{2\rho-2}+|x(t_{i-M})|^{2\rho-2})\\
				&~~~~~~~~~~~~~~\cdot(|x(s)-x(t_i)|^2+|x(s-\tau)-x(t_{i-M})|^2)\Big]ds \\
				\leq&C\Delta\int_{t_{i-1}}^{t_i} \big(1+\mathbb{E}|x(s)|^{\frac{\bar{p}(2\rho-2)}{\bar{p}-2\rho}}+\mathbb{E}|x(s-\tau)|^{\frac{\bar{p}(2\rho-2)}{\bar{p}-2\rho}}\\
				&~~~~~~~~~~~~~~~+\mathbb{E}|x(t_i)|^{\frac{\bar{p}(2\rho-2)}{\bar{p}-2\rho}}+\mathbb{E}|x(t_{i-M})|^{\frac{\bar{p}(2\rho-2)}{\bar{p}-2\rho}}\big)^\frac{\bar{p}-2\rho}{\bar{p}}\\
				&~~~~~~~~~~~~~\cdot\Big[\left( \mathbb{E}|x(s)-x(t_i)|^\frac{\bar{p}}{\rho}\right) ^\frac{2\rho}{\bar{p}}+\big(\mathbb{E}|x(s-\tau)-x(t_{i-M})|^\frac{\bar{p}}{\rho}\big) ^\frac{2\rho}{\bar{p}}\Big]ds\\
				\leq&C\Delta^3.
			\end{split}
		\end{equation*}
		Employing the It\^{o} isometry gives that
		\begin{equation*}
			\begin{split}
				&\mathbb{E}\left| \int_{t_{i-1}}^{t_i} [\beta(x(s) , x(s-\tau))-\beta(x(t_{i-1})-x(t_{i-1-M}))]dW(s)\right|^2\\
				=&\mathbb{E}\int_{t_{i-1}}^{t_i} \big\|\beta(x(s) , x(s-\tau))-\beta(x(t_{i-1})-x(t_{i-1-M}))\big\|^2ds\\
				\leq&C\int_{t_{i-1}}^{t_i} \mathbb{E} \Big[ (1+|x(s)|^{\rho-1}+|x(s-\tau)|^{\rho-1}+|x(t_i)|^{\rho-1}+|x(t_{i-M})|^{\rho-1})\\
				&~~~~~~~~~~~~~\cdot(|x(s)-x(t_i)|^2+|x(s-\tau)-x(t_{i-M})|^2)\Big]ds \\
				\leq&C\Delta^2.
			\end{split}
		\end{equation*}
		Thus, by Theorem \ref{thm2.5}, we draw a conclusion that
		\begin{equation} \label{ER}
			\mathbb{E}|\mathcal{R}_i|^2\leq K_{1}\Delta^2.
		\end{equation}
		Moreover,
		\begin{equation} \label{ER2}
			\begin{split}
				&\mathbb{E}\big[|\mathbb{E}(\mathcal{R}_i\mid \mathcal{F}_{t_{i-1}})|^2\big]\\
				&=\mathbb{E}\Big[\big|\mathbb{E}\big( \int_{t_{i-1}}^{t_i} [\alpha(x(s) , x(s-\tau))-\alpha(x(t_i)-x(t_{i-M}))]ds\mid \mathcal{F}_{t_{i-1}}\big)\big|^2\Big]\\
				&\leq \mathbb{E}\big|\int_{t_{i-1}}^{t_i} [\alpha(x(s) , x(s-\tau))-\alpha(x(t_i)-x(t_{i-M}))]ds\big |^2\\
				&\leq C\Delta^3,
			\end{split}
		\end{equation}
		where we used the fact that
		\begin{equation*}
		 \mathbb{E}\left( \int_{t_{i-1}}^{t_i} [\beta(x(s) , x(s-\tau))-\beta(x(t_{i-1})-x(t_{i-1-M}))]dW(s)\mid \mathcal{F}_{t_{i-1}}\right) =0.
		\end{equation*}
		Combining (\ref{ER}) and (\ref{ER2}) gives the result.
	\end{proof}
	\section{Convergence rate under the stronger condition}
	This section shows the convergence rate of BEM (\ref{bem1}) for SDDE (\ref{sdde1}) under stronger condition by using a simpler proof process.
	\begin{ass}\label{4.1}
		There exist constants $K_3,K_4>0$ such that
		\begin{equation*}
			\begin{split}
				&\langle x-\bar{x} , \alpha(x,y)-\alpha(\bar{x},\bar{y})\rangle +\|\beta(x,y)-\beta(\bar{x},\bar{y})\|^{2}\\
				\leq& -K_3|x-\bar{x}|^{2}+K_4|y-\bar{y}|^{2},
			\end{split}
		\end{equation*}
		for any $x,y,\bar{x},\bar{y}\in \mathbb{R}^d$.
	\end{ass}
	 Under Assumptions \ref{a3} and \ref{4.1}, (\ref{sss}) and (\ref{ssss}) can be obtained similarly.
	\begin{thm} \label{thm4.1}
		Let Assumptions \ref{a3}, \ref{4.1} hold and $\Delta\in(0 , \frac{1}{K})$.Then for the exact solution x(t) to SDDE (\ref{sdde1}) and the numerical solution $Z_n$ to BEM (\ref{bem1})
		, there is a constant $C$ independent of n, $\Delta$ satisfying
		\begin{equation*} 
			\sup_{0\leq n \leq N}\mathbb{E}|x(t_n)-Z_n|^2\leq C\Delta.
		\end{equation*}
	\end{thm}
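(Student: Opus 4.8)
The plan is to bypass Theorem \ref{thm2.5} and derive a self-contained recursion for $\mathbb{E}|x(t_n)-Z_n|^2$ directly, letting the dissipativity built into Assumption \ref{4.1} do the work. Write $e_n=x(t_n)-Z_n$ and abbreviate $D\alpha_n=\alpha(x(t_n),x(t_{n-M}))-\alpha(Z_n,Z_{n-M})$ and $D\beta_n=\beta(x(t_n),x(t_{n-M}))-\beta(Z_n,Z_{n-M})$. Subtracting (\ref{bem1}) from the integrated form of (\ref{sdde1}) exactly as in Lemma \ref{le1} gives $e_n-\Delta D\alpha_n=e_{n-1}+D\beta_{n-1}\Delta W_{n-1}+\mathcal{R}_n$, with $\mathcal{R}_n$ as in (\ref{Rn}). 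All the expectations below are finite by Lemma \ref{le2}, which applies since Assumption \ref{a3} contains Assumption \ref{a1}. First I would apply the crucial equality (\ref{cru}) with $x=e_n$ and $y=e_n-\Delta D\alpha_n$ to obtain
\begin{equation*}
|e_n|^2=|e_n-\Delta D\alpha_n|^2-\Delta^2|D\alpha_n|^2+2\Delta\langle D\alpha_n,e_n\rangle.
\end{equation*}

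Next I would expand $|e_n-\Delta D\alpha_n|^2=|e_{n-1}+D\beta_{n-1}\Delta W_{n-1}+\mathcal{R}_n|^2$ and take expectations. Since $e_{n-1}$ and $D\beta_{n-1}$ are $\mathcal{F}_{t_{n-1}}$-measurable, $\mathbb{E}\langle e_{n-1},D\beta_{n-1}\Delta W_{n-1}\rangle=0$, while It\^o's isometry gives $\mathbb{E}|D\beta_{n-1}\Delta W_{n-1}|^2=\Delta\mathbb{E}\|D\beta_{n-1}\|^2$. The decisive step is to estimate $2\Delta\langle D\alpha_n,e_n\rangle$ by Assumption \ref{4.1} taken \emph{at index $n$}, which produces $2\Delta\langle D\alpha_n,e_n\rangle\leq-2K_3\Delta|e_n|^2+2K_4\Delta|e_{n-M}|^2-2\Delta\|D\beta_n\|^2$. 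Discarding $-\Delta^2\mathbb{E}|D\alpha_n|^2\leq0$ and moving $-2K_3\Delta\mathbb{E}|e_n|^2$ to the left side yields a recursion in which the diffusion enters as $+\Delta\mathbb{E}\|D\beta_{n-1}\|^2$ (from the explicit scheme) and $-2\Delta\mathbb{E}\|D\beta_n\|^2$ (from the implicit drift via Assumption \ref{4.1}).

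The remainder terms are controlled exactly as in the proof of Theorem \ref{thm3.2}: (\ref{ER}) and (\ref{ER2}) supply $\mathbb{E}|\mathcal{R}_n|^2\leq C\Delta^2$ and $\mathbb{E}[|\mathbb{E}(\mathcal{R}_n\mid\mathcal{F}_{t_{n-1}})|^2]\leq C\Delta^3$. Using the $\mathcal{F}_{t_{n-1}}$-measurability of $e_{n-1}$, I would write $\mathbb{E}\langle e_{n-1},\mathcal{R}_n\rangle=\mathbb{E}\langle e_{n-1},\mathbb{E}(\mathcal{R}_n\mid\mathcal{F}_{t_{n-1}})\rangle$ and bound it by Young's inequality by $\Delta\mathbb{E}|e_{n-1}|^2+C\Delta^2$. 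The cross term $2\mathbb{E}\langle D\beta_{n-1}\Delta W_{n-1},\mathcal{R}_n\rangle$ I would treat with Cauchy--Schwarz and Young, bounding it by $\tfrac{\epsilon\Delta}{2}\mathbb{E}\|D\beta_{n-1}\|^2+C\Delta^2$ for a small $\epsilon$ (using $\mathbb{E}|D\beta_{n-1}\Delta W_{n-1}|^2=\Delta\mathbb{E}\|D\beta_{n-1}\|^2$ and $\mathbb{E}|\mathcal{R}_n|^2\leq C\Delta^2$), so that the diffusion coefficient at index $n-1$ becomes $(1+\tfrac{\epsilon}{2})\Delta$.

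Summing over $n=1,\dots,k$ and using $e_0=0$ makes the role of Assumption \ref{4.1} explicit: the diffusion contributions form the telescoping sum $\sum_{n=1}^k[(1+\tfrac{\epsilon}{2})\Delta\mathbb{E}\|D\beta_{n-1}\|^2-2\Delta\mathbb{E}\|D\beta_n\|^2]$, which is non-positive once $\epsilon<2$, since $D\beta_0=0$ by the matching initial data, and can therefore simply be dropped. Reindexing $\sum_{n=1}^k\mathbb{E}|e_{n-M}|^2=\sum_{j=0}^{k-M}\mathbb{E}|e_j|^2$ (with $e_j=0$ for $j\leq0$) and using $k\Delta\leq T$ to collapse the accumulated remainders into $C\Delta$, I arrive at $\mathbb{E}|e_k|^2\leq(2K_4+1)\Delta\sum_{j=0}^{k-1}\mathbb{E}|e_j|^2+C\Delta$, whence the discrete Gronwall inequality gives $\sup_{0\leq n\leq N}\mathbb{E}|e_n|^2\leq C\Delta$. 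The main obstacle is precisely the bookkeeping of these diffusion terms: because the BEM treats $\beta$ explicitly at $t_{n-1}$, whereas Assumption \ref{4.1} together with the implicit drift produces a diffusion difference at $t_n$, one must verify that the two occur with signs and weights that allow them to telescope into a harmless boundary term. The coefficient $1$ on $\|\beta(x,y)-\beta(\bar x,\bar y)\|^2$ in Assumption \ref{4.1} (rather than $\tfrac{q-1}{2}$ in Assumption \ref{a1}) is exactly what makes this cancellation go through and is what shortens the argument relative to Theorem \ref{thm3.2}.
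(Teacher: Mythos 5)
Your proposal is correct and follows essentially the same route as the paper's proof: both apply the identity (\ref{cru}) to isolate $2\Delta\langle D\alpha_n,e_n\rangle$, invoke Assumption \ref{4.1} at index $n$ so that the resulting $-2\Delta\mathbb{E}\|D\beta_n\|^2$ telescopes against the $+\Delta\mathbb{E}\|D\beta_{n-1}\|^2$ coming from the It\^o isometry, control $\mathcal{R}_n$ via (\ref{ER}) and (\ref{ER2}), and finish with the discrete Gronwall inequality. The only differences are bookkeeping ones: the paper writes the identity as $2\langle e_n-e_{n-1},e_n\rangle=|e_n|^2-|e_{n-1}|^2+|e_n-e_{n-1}|^2$ and absorbs the cross terms into $|e_n-e_{n-1}|^2$ with $\varepsilon_*^2=2K_3\Delta$, whereas you expand $|e_{n-1}+D\beta_{n-1}\Delta W_{n-1}+\mathcal{R}_n|^2$ directly and discard $2K_3\Delta\mathbb{E}|e_n|^2$ --- both lead to the same recursion.
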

	\begin{proof}
		According to the definitions of (\ref{sdde1}) and (\ref{bem1}), we have 
		\begin{equation*}
			\begin{split}
				&x(t_n)-Z_n\\
				=&[x(t_{n-1})-Z_{n-1}]+\Delta[\alpha(x(t_n),x(t_{n-M}))-\alpha(Z_n,Z_{n-M})]\\
				&+[\beta(x(t_{n-1}),x(t_{n-1-M}))-\beta(Z_{n-1},Z_{n-1-M})]\Delta W_{n-1}\\
				&+\mathcal{R}_n ,
			\end{split}
		\end{equation*}
		where $\mathcal{R}_n$ is defined by (\ref{Rn}). 
		Then
		\begin{equation*}
			\begin{split}
				&2\mathbb{E}\big\langle[x(t_n)-Z_n]-[x(t_{n-1})-Z_{n-1}],x(t_n)-Z_n\big\rangle\\
				=&2\Delta\mathbb{E}\big\langle\alpha(x(t_n),x(t_{n-M}))-\alpha(Z_n,Z_{n-M}),x(t_n)-Z_n\big\rangle\\
				&+2\mathbb{E}\big\langle[\beta(x(t_{n-1}),x(t_{n-1-M}))-\beta(Z_{n-1},Z_{n-1-M})]\Delta W_{n-1},x(t_n)-Z_n\big\rangle\\
				&+2\mathbb{E}\big\langle\mathcal{R}_n,x(t_n)-Z_n\big\rangle\\
				=&2\Delta\mathbb{E}\big\langle\alpha(x(t_n),x(t_{n-M}))-\alpha(Z_n,Z_{n-M}),x(t_n)-Z_n\big\rangle\\
				&+2\mathbb{E}\big\langle[\beta(x(t_{n-1}),x(t_{n-1-M}))-\beta(Z_{n-1},Z_{n-1-M})]\Delta W_{n-1},\\
				&~~~~~[x(t_n)-Z_n]-[x(t_{n-1})-Z_{n-1}]\big\rangle\\
				&+2\mathbb{E}\big\langle\mathbb{E}\left( \mathcal{R}_n\mid \mathcal{F}_{t_{i-1}}\right),x(t_{n-1})-Z_{n-1}\big\rangle\\
				&+2\mathbb{E}\big\langle\mathcal{R}_n,[x(t_n)-Z_n]-[x(t_{n-1})-Z_{n-1}]\big\rangle.
			\end{split}
		\end{equation*} 
		By Young's inequality and Assumption \ref{4.1}, there exists a positive constant $\varepsilon_*$ such that
		\begin{equation*}
			\begin{split}
				&\mathbb{E}\big|x(t_n)-Z_n\big|^2-\mathbb{E}\big|x(t_{n-1})-Z_{n-1}\big|^2+\mathbb{E}\big|[x(t_n)-Z_n]-[x(t_{n-1})-Z_{n-1}]\big|^2\\
				\leq&-2K_3\Delta\mathbb{E}\big|x(t_n)-Z_n\big|^2+2K_4\Delta\mathbb{E}\big|x(t_{n-M})-Z_{n-M}\big|^2\\
				&-2\Delta\mathbb{E}\big\|\beta(x(t_n),x(t_{n-M}))-\beta(Z_n,Z_{n-M})\big\|^2\\
				&+2\Delta\mathbb{E}\big\|\beta(x(t_{n-1}),x(t_{n-1-M}))-\beta(Z_{n-1},Z_{n-1-M})\big\|^2\\
				&+2\mathbb{E}|\mathcal{R}_n|^2+\mathbb{E}\big|[x(t_n)-Z_n]-[x(t_{n-1})-Z_{n-1}]\big|^2\\
				&+\frac{1}{\varepsilon_*^2}\mathbb{E}\big[|\mathbb{E}(\mathcal{R}_i\mid \mathcal{F}_{t_{i-1}})|^2\big]+\varepsilon_*^2\mathbb{E}\big |x(t_{n-1})-Z_{n-1}\big|^2,
			\end{split}
		\end{equation*} 
		where we used the equliaty (\ref{cru}).
		Then the inequality can be rearranged as
		\begin{equation}\label{sec4inequ}
			\begin{split}
				&\big(1+2K_3\Delta\big)\mathbb{E}\big |x(t_n)-Z_n\big|^2+2\Delta\mathbb{E}\big\|\beta(x(t_n),x(t_{n-M}))-\beta(Z_n,Z_{n-M})\big\|^2\\
				\leq&\big(1+\varepsilon_*^2\big)\mathbb{E}\big|x(t_{n-1})-Z_{n-1}\big|^2\\
				&+2\Delta\mathbb{E}\big\|\beta(x(t_{n-1}),x(t_{n-1-M}))-\beta(Z_{n-1},Z_{n-1-M})\big\|^2\\
				&+2K_4\Delta\mathbb{E}\big|x(t_{n-M})-Z_{n-M}\big|^2+2\mathbb{E}|\mathcal{R}_n|^2+\frac{1}{\varepsilon_*^2}\mathbb{E}\big[|\mathbb{E}(\mathcal{R}_i\mid \mathcal{F}_{t_{i-1}})|^2\big].
			\end{split}
		\end{equation}
		By choosing $\varepsilon_*=\sqrt{2K_3\Delta}$ and denoting ${G_n}=\big(1+2K_3\Delta\big)\mathbb{E}\big |x(t_n)-Z_n\big|^2+2\Delta\mathbb{E}\big\|\beta(x(t_n),x(t_{n-M}))-\beta(Z_n,Z_{n-M})\big\|^2$, we get that
		\begin{equation*}
			{G_n}-{G_{n-1}}\leq2K_4\Delta\mathbb{E}\big|x(t_{n-M})-Z_{n-M}\big|^2+2\mathbb{E}|\mathcal{R}_n|^2+\frac{1}{2K_3\Delta}\mathbb{E}\big[|\mathbb{E}(\mathcal{R}_i\mid \mathcal{F}_{t_{i-1}})|^2\big].
		\end{equation*}
		It is easy to see that
		\begin{equation*}
			\begin{split}
				{G_n}\leq&2K_4\Delta\sum_{i=1}^{n}\mathbb{E}\big|x(t_{i-M})-Z_{i-M}\big|^2+2\sum_{i=1}^{n}\mathbb{E}|\mathcal{R}_n|^2\\
				&+\frac{1}{2K_3\Delta}\sum_{i=1}^{n}\mathbb{E}\big[|\mathbb{E}(\mathcal{R}_i\mid \mathcal{F}_{t_{i-1}})|^2\big]\\
				\leq&2K_4\Delta\sum_{i=1-M}^{0}\mathbb{E}\big|x(t_i)-Z_i\big|^2+2K_4\Delta\sum_{i=1}^{n-1}\mathbb{E}\big|x(t_i)-Z_i\big|^2\\
				&+2\sum_{i=1}^{n}\mathbb{E}|\mathcal{R}_n|^2+\frac{1}{2K_3\Delta}\sum_{i=1}^{n}\mathbb{E}\big[|\mathbb{E}(\mathcal{R}_i\mid \mathcal{F}_{t_{i-1}})|^2\big]\\
				\leq&2K_4\Delta\sum_{i=1}^{n-1}\mathbb{E}\big|x(t_i)-Z_i\big|^2+2\sum_{i=1}^{n}\mathbb{E}|\mathcal{R}_n|^2+\frac{1}{2K_3\Delta}\sum_{i=1}^{n}\mathbb{E}\big[|\mathbb{E}(\mathcal{R}_i\mid \mathcal{F}_{t_{i-1}})|^2\big],
			\end{split}
		\end{equation*}
		which implies that
		\begin{equation*}
		\begin{split}
			&\mathbb{E}\big |x(t_n)-Z_n\big|^2\\
			&\leq2K_4\Delta\sum_{i=1}^{n-1}\mathbb{E}\big|x(t_i)-Z_i\big|^2+2\sum_{i=1}^{n}\mathbb{E}|\mathcal{R}_n|^2+\frac{1}{2K_3\Delta}\sum_{i=1}^{n}\mathbb{E}\big[|\mathbb{E}(\mathcal{R}_i\mid \mathcal{F}_{t_{i-1}})|^2\big].
		\end{split}
		\end{equation*}
		Using the discrete-type Gronwall inequality, we can obtain the convergence rate.
	\end{proof}
	
	\begin{rem}
		The reason why the technique in Theorem \ref{thm4.1} can simplify the proof process under the stronger condition is that: in (\ref{sec4inequ}), when $1+2K_3\Delta>0$ holds, the subsequent proof process can be given. If Assumption \ref{a1} holds but Assumption \ref{4.1} does not, $1+2K_3\Delta$ will change into $1-2K_3\Delta$, then we can not use this technique to get the desired result.
	\end{rem}
	
	\section{Mean-square stability of BEM}
	
	This section will show that the BEM can inherit the exponential mean-square stability under the fairly general conditions.
	\begin{ass}\label{ass5.1}
	There exist some constants $l>1$, $\Gamma>2$, $c_1>c_2>0, c_3>c_4>0$ such that
	\begin{equation*} 
		\big\langle x , \alpha(x,y)\big\rangle+\frac{l}{2}\big\|\beta(x,y)\big\|^{2}\leq {-c_1}|x|^2+{c_2}|y|^2-{c_3}|x|^{\Gamma}+{c_4}|y|^{\Gamma},
	\end{equation*}
	for all $x,y\in \mathbb{R}^d$.
\end{ass}
	
\begin{defn}
	The exact solution of (\ref{sdde1}) is said to be exponentially mean-square stable if there exists a constant $\varepsilon>0$ such that
		\begin{equation*}
		\limsup_{t\rightarrow\infty}\frac{\log\mathbb{E}|x(t)|^{2}}{t}\le-\varepsilon.
	\end{equation*}
\end{defn}

\begin{defn}
The numerical solution defined by (\ref{bem1}) is said to be exponentially mean-square stable if there exists a constant $\varepsilon>0$ such that, for any $\Delta< \frac{1}{K}$,
\begin{equation*}
\limsup_{n\rightarrow\infty}\frac{\log\mathbb{E}|Z_n|^{2}}{n\Delta}\le-\varepsilon.
		\end{equation*}
\end{defn}

	\begin{thm}
		Under Assumption \ref{ass5.1}, the exact solution of  (\ref{sdde1}) is exponentially mean-square stable.
		\end{thm}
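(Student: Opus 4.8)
The plan is to apply the It\^{o} formula to $e^{\varepsilon t}|x(t)|^2$ for a suitably small $\varepsilon>0$ and show that the resulting expression stays bounded, which forces $\mathbb{E}|x(t)|^2$ to decay like $e^{-\varepsilon t}$. First I would use the standard It\^{o} formula to obtain
\begin{equation*}
	\mathbb{E}\big[e^{\varepsilon t}|x(t)|^2\big]=\mathbb{E}|x(0)|^2+\mathbb{E}\int_0^t e^{\varepsilon s}\Big(\varepsilon|x(s)|^2+2\langle x(s),\alpha(x(s),x(s-\tau))\rangle+\|\beta(x(s),x(s-\tau))\|^2\Big)\,ds,
\end{equation*}
where the stochastic integral against $dW$ vanishes in expectation (using the moment bounds guaranteed under the relevant hypotheses). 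Since $l>1$, Assumption \ref{ass5.1} gives $2\langle x,\alpha(x,y)\rangle+\|\beta(x,y)\|^2\leq 2\langle x,\alpha(x,y)\rangle+l\|\beta(x,y)\|^2\leq -2c_1|x|^2+2c_2|y|^2-2c_3|x|^\Gamma+2c_4|y|^\Gamma$, so the integrand is controlled by $e^{\varepsilon s}\big((\varepsilon-2c_1)|x(s)|^2+2c_2|x(s-\tau)|^2-2c_3|x(s)|^\Gamma+2c_4|x(s-\tau)|^\Gamma\big)$.

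Next I would handle the delay terms by the usual time-shift trick. For the quadratic delay contribution, the change of variable $u=s-\tau$ gives
\begin{equation*}
	\int_0^t e^{\varepsilon s}|x(s-\tau)|^2\,ds=e^{\varepsilon\tau}\int_{-\tau}^{t-\tau}e^{\varepsilon u}|x(u)|^2\,du\leq e^{\varepsilon\tau}\int_{-\tau}^{0}e^{\varepsilon u}|\varphi(u)|^2\,du+e^{\varepsilon\tau}\int_0^t e^{\varepsilon u}|x(u)|^2\,du,
\end{equation*}
and similarly for the $\Gamma$-th power term. Absorbing these into the main integral, the coefficient in front of $\int_0^t e^{\varepsilon s}|x(s)|^2\,ds$ becomes $\varepsilon-2c_1+2c_2e^{\varepsilon\tau}$, and the coefficient in front of $\int_0^t e^{\varepsilon s}|x(s)|^\Gamma\,ds$ becomes $-2c_3+2c_4e^{\varepsilon\tau}$. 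The hypotheses $c_1>c_2>0$ and $c_3>c_4>0$ are exactly what make these two coefficients strictly negative for all sufficiently small $\varepsilon$: by continuity, since at $\varepsilon=0$ they equal $-2c_1+2c_2<0$ and $-2c_3+2c_4<0$ respectively, one can choose $\varepsilon>0$ small enough that both remain $\leq 0$ (the higher-order term can simply be discarded once its coefficient is nonpositive).

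With such an $\varepsilon$ fixed, the two running integrals drop out and one is left with $\mathbb{E}\big[e^{\varepsilon t}|x(t)|^2\big]\leq \mathbb{E}|x(0)|^2+C_\varphi$, where $C_\varphi$ collects the finite contributions from the initial-data integrals $\int_{-\tau}^0 e^{\varepsilon u}|\varphi(u)|^2\,du$ and its $\Gamma$-power analogue, both finite because $\varphi\in\mathcal{C}([-\tau,0];\mathbb{R}^d)$ is bounded. Dividing by $e^{\varepsilon t}$ and taking $\log$ then yields $\limsup_{t\to\infty}\frac{\log\mathbb{E}|x(t)|^2}{t}\leq-\varepsilon$, which is the claimed exponential mean-square stability. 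The main obstacle I anticipate is the careful bookkeeping in choosing $\varepsilon$: one must verify that a \emph{single} $\varepsilon$ simultaneously makes both the quadratic and the $\Gamma$-power coefficients nonpositive and that the martingale term genuinely vanishes (requiring the integrability $\mathbb{E}\int_0^t e^{\varepsilon s}\|\beta\|^2\,ds<\infty$, which follows from the polynomial growth bound \eqref{ssss} together with the finite moment bounds on the exact solution); the algebraic manipulations with the shifted integrals are otherwise routine.
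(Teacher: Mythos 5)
Your argument is correct and is the standard Lyapunov/It\^{o} approach; note that the paper itself gives no proof of this theorem, deferring entirely to Theorem 3.1 of \cite{35}, and the argument there is of exactly this type, so you are not diverging from the intended route. Two small points of bookkeeping. First, the step where the stochastic integral ``vanishes in expectation'' should not be justified by appealing to the growth bound (\ref{ssss}), which lives under Assumption \ref{a3} rather than Assumption \ref{ass5.1}, and in any case would require moment bounds of order roughly $\rho+3$ that are not asserted here; the clean fix is the standard localization: apply It\^{o}'s formula up to $t\wedge\sigma_R$ with $\sigma_R=\inf\{t:|x(t)|\ge R\}$, where the stopped stochastic integral is a genuine martingale, and then let $R\to\infty$ via Fatou's lemma. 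Second, your choice of $\varepsilon$ is fine: both coefficient functions $\varepsilon-2c_1+2c_2e^{\varepsilon\tau}$ and $-2c_3+2c_4e^{\varepsilon\tau}$ are continuous and strictly negative at $\varepsilon=0$ (by $c_1>c_2$ and $c_3>c_4$), so a single sufficiently small $\varepsilon>0$ works for both simultaneously; this matches the condition $\varepsilon+2c_2e^{\varepsilon\tau}<2c_1$ that the paper imposes in the discrete analogue (Theorem \ref{thm5.2}). With the localization inserted, the proof is complete.
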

	The proof of above theorem is the same as Theorem 3.1 in \cite{35}, so we omit it. Then we simply prove that the numerical solution to BEM is exponentially mean-square stable by using the novel technique.
	\begin{thm}\label{thm5.2}
		Let Assumption \ref{ass5.1} and (\ref{*}) hold. Then there exists a sufficiently small $\Delta<\min\{\frac{l-1}{2c_1},\Delta^*, \frac{1}{K}\}$ such that
		\begin{equation*}
			\limsup_{n\rightarrow\infty}\frac{\log\mathbb{E}|Z_n|^{2}}{n\Delta}\le-\varepsilon,
		\end{equation*}
		where $\varepsilon$ satisfies $\varepsilon+2{c_2}e^{\varepsilon\tau}<2{c_1}$, and $\Delta^*$ is the root of  $1+2{c_1}\Delta-e^{\varepsilon \Delta}-2{c_2}\Delta e^{\varepsilon \tau}=0$.
	\end{thm}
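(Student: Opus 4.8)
The plan is to transport the ``novel technique'' of Lemma \ref{le1} to the stability setting, i.e. to convert the one-step relation of the scheme into a clean energy identity for $\mathbb{E}|Z_n|^2$ by means of the crucial equality (\ref{cru}), and then to read off the rate through an $e^{\varepsilon t_n}$-weighted recursion. Writing $\alpha_n:=\alpha(Z_n,Z_{n-M})$ and $\beta_{n-1}:=\beta(Z_{n-1},Z_{n-1-M})$, the scheme (\ref{bem1}) reads $Z_n-Z_{n-1}=\Delta\alpha_n+\beta_{n-1}\Delta W_{n-1}$, which is well posed because (\ref{*}) with $\Delta<1/K$ forces a unique $Z_n$. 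Applying (\ref{cru}) with $x=Z_n$, $y=Z_{n-1}$, expanding $2\langle Z_n-Z_{n-1},Z_n\rangle$ (and substituting $Z_n=Z_{n-1}+\Delta\alpha_n+\beta_{n-1}\Delta W_{n-1}$ once more in its diffusion part) together with $|Z_n-Z_{n-1}|^2$, the mixed drift--diffusion contributions $\pm 2\Delta\langle\alpha_n,\beta_{n-1}\Delta W_{n-1}\rangle$ cancel identically. Taking expectations and using that $\beta_{n-1},Z_{n-1}$ are $\mathcal{F}_{t_{n-1}}$-measurable while $\Delta W_{n-1}$ is independent with covariance $\Delta I_m$, I obtain the clean identity
\[ \mathbb{E}|Z_n|^2-\mathbb{E}|Z_{n-1}|^2=2\Delta\,\mathbb{E}\langle\alpha_n,Z_n\rangle+\Delta\,\mathbb{E}\|\beta_{n-1}\|^2-\Delta^2\mathbb{E}|\alpha_n|^2, \]
so the implicitness of the drift is again harmless.

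Next I would insert Assumption \ref{ass5.1} at $(Z_n,Z_{n-M})$, which bounds $2\Delta\mathbb{E}\langle\alpha_n,Z_n\rangle$ above by $-2c_1\Delta\mathbb{E}|Z_n|^2+2c_2\Delta\mathbb{E}|Z_{n-M}|^2-2c_3\Delta\mathbb{E}|Z_n|^\Gamma+2c_4\Delta\mathbb{E}|Z_{n-M}|^\Gamma-l\phi_n$, where $\phi_n:=\Delta\mathbb{E}\|\beta(Z_n,Z_{n-M})\|^2\ge 0$, and I discard $-\Delta^2\mathbb{E}|\alpha_n|^2\le 0$. The genuine obstruction is that the identity carries the explicit diffusion $\phi_{n-1}$ whereas the dissipativity yields $-l\phi_n$: they sit at different indices. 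To reconcile them I would pass to the combined functional $U_n:=\mathbb{E}|Z_n|^2+\phi_n$. Subtracting $U_{n-1}$ collapses $\phi_{n-1}$, and substituting $\mathbb{E}|Z_n|^2=U_n-\phi_n$ turns the diffusion contribution into $\bigl(2c_1\Delta-(l-1)\bigr)\phi_n$. This is exactly where the hypothesis $\Delta<\frac{l-1}{2c_1}$ enters: it makes this coefficient nonpositive, so the term is dropped. Using $\mathbb{E}|Z_{n-M}|^2\le U_{n-M}$ I arrive at the closed delay recursion
\[ (1+2c_1\Delta)\,U_n\le U_{n-1}+2c_2\Delta\,U_{n-M}-2c_3\Delta\mathbb{E}|Z_n|^\Gamma+2c_4\Delta\mathbb{E}|Z_{n-M}|^\Gamma. \]

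Finally I would extract the decay with the weight $e^{\varepsilon t_n}$, $t_n=n\Delta$, $M\Delta=\tau$. Multiplying the recursion by $e^{\varepsilon t_n}$ and summing by parts, the net coefficient of each interior $e^{\varepsilon t_n}U_n$ equals $-\bigl(1+2c_1\Delta-e^{\varepsilon\Delta}-2c_2\Delta e^{\varepsilon\tau}\bigr)=-f(\Delta)$, where $f$ is the function whose first positive root is $\Delta^*$; since $f(0)=0$ and $f'(0)=2c_1-\varepsilon-2c_2e^{\varepsilon\tau}>0$ under the standing condition on $\varepsilon$, one has $f(\Delta)>0$ on $(0,\Delta^*)$, so these coefficients are negative and those terms are discarded. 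Reindexing the delayed super-linear sum shows each interior $e^{\varepsilon t_n}\Delta\mathbb{E}|Z_n|^\Gamma$ gets coefficient $-2c_3+2c_4e^{\varepsilon\tau}$, which is $\le 0$ once $\varepsilon$ is chosen small enough that $e^{\varepsilon\tau}\le c_3/c_4$ (compatible with $\varepsilon+2c_2e^{\varepsilon\tau}<2c_1$ as $\varepsilon\to 0^+$, because $c_1>c_2$ and $c_3>c_4$). Hence every running term is nonpositive and only quantities built from the initial data $\varphi$ survive, giving $e^{\varepsilon t_N}U_N\le C$. Since $\mathbb{E}|Z_N|^2\le U_N$, this is $\mathbb{E}|Z_N|^2\le Ce^{-\varepsilon N\Delta}$, and passing to $\tfrac{1}{N\Delta}\log(\cdot)$ and then $\limsup$ yields the assertion.

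The crux, and the step I expect to be the main obstacle, is precisely the diffusion index mismatch $\phi_{n-1}$ versus $-l\phi_n$; everything before it is the energy identity, and everything after it is a weighted discrete Gronwall/summation argument governed by the $\Delta^*$-threshold. Once the mismatch is absorbed by the functional $U_n$ — which is where the strict inequality $l>1$ is consumed through $\Delta<\frac{l-1}{2c_1}$ — the remainder is routine, and, as advertised, the whole argument uses neither moment boundedness of $Z_n$ nor any local Lipschitz property of $\alpha$ and $\beta$.
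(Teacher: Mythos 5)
Your proposal is correct and follows essentially the same route as the paper: the crucial equality (\ref{cru}) applied to $2\langle Z_n-Z_{n-1},Z_n\rangle$, Assumption \ref{ass5.1} at $(Z_n,Z_{n-M})$, absorption of the diffusion index mismatch through the Lyapunov functional $U_n=\mathbb{E}|Z_n|^2+\Delta\mathbb{E}\|\beta(Z_n,Z_{n-M})\|^2$ (the paper's $F_n$) using $\Delta<\tfrac{l-1}{2c_1}$, and the $e^{\varepsilon n\Delta}$-weighted telescoping governed by $f(\Delta)=1+2c_1\Delta-e^{\varepsilon\Delta}-2c_2\Delta e^{\varepsilon\tau}$ and $g(\Delta)=2c_3\Delta-2c_4\Delta e^{\varepsilon\tau}$. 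The only (harmless) deviations are that you derive an exact energy identity with the extra nonpositive term $-\Delta^2\mathbb{E}|\alpha_n|^2$ where the paper uses Young's inequality on the cross term, and that you correctly flag the additional requirement $e^{\varepsilon\tau}\le c_3/c_4$, which the paper also invokes inside its proof.
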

	\begin{proof}
		Obviously, we can get from (\ref{cru}) and (\ref{bem1}) that
		\begin{equation*}
			\begin{split}
				&2\mathbb{E}\big\langle Z_n-Z_{n-1},Z_n\big\rangle\\
				=&2\Delta\mathbb{E}\big\langle\alpha(Z_n,Z_{n-M}),Z_n\big\rangle+2\mathbb{E}\big\langle \beta(Z_{n-1},Z_{n-1-M})\Delta W_{n-1},Z_n\big\rangle\\
				=&2\Delta\mathbb{E}\big\langle\alpha(Z_n,Z_{n-M}),Z_n\big\rangle+2\mathbb{E}\big\langle \beta(Z_{n-1},Z_{n-1-M})\Delta W_{n-1},Z_n-Z_{n-1}\big\rangle\\
				\leq&-2{c_1}\Delta\mathbb{E}|Z_n|^2+2{c_2}\Delta\mathbb{E}|Z_{n-M}|^2-2{c_3}\Delta\mathbb{E}|Z_n|^{\Gamma}+2{c_4}\Delta\mathbb{E}|Z_{n-M}|^{\Gamma}\\
				&-l\Delta\mathbb{E}\big\|\beta(Z_n,Z_{n-M})\big\|^{2}+\Delta\mathbb{E}\big\|\beta(Z_{n-1},Z_{n-1-M})\big\|^{2}+\mathbb{E}\big|Z_n-Z_{n-1}\big|^2.\\
			\end{split}
		\end{equation*}
		Rearranging this inequality, we have
		\begin{equation*}
			\begin{split}
				&\big(1+2{c_1}\Delta\big)\mathbb{E}|Z_n|^{2}+l\Delta\mathbb{E}\big\|\beta(Z_n,Z_{n-M})\big\|^{2}\\
				\leq&\mathbb{E}|Z_{n-1}|^{2}+\Delta\mathbb{E}\big\|\beta(Z_{n-1},Z_{n-1-M})\big\|^{2}+2{c_2}\Delta\mathbb{E}|Z_{n-M}|^2\\
				&-2{c_3}\Delta\mathbb{E}|Z_n|^{\Gamma}+2{c_4}\Delta\mathbb{E}|Z_{n-M}|^{\Gamma}.\\
			\end{split}
		\end{equation*}
	Due to $\Delta<\frac{l-1}{2c_1}$, it is easy to get
		\begin{equation*}
		\begin{split}
			\big(1+2{c_1}\Delta\big)F_n
			\leq F_{n-1}+2{c_2}\Delta\mathbb{E}|Z_{n-M}|^2
			-2{c_3}\Delta\mathbb{E}|Z_n|^{\Gamma}+2{c_4}\Delta\mathbb{E}|Z_{n-M}|^{\Gamma},\\
		\end{split}
	\end{equation*}
	where $F_n=\mathbb{E}|Z_n|^{2}+\Delta\mathbb{E}\big\|\beta(Z_n,Z_{n-M})\big\|^{2}$.
	By multiplying both sides by $e^{\varepsilon n\Delta}$ and subtracting $\big(1+2{c_1}\Delta\big)e^{\varepsilon (n-1)\Delta}F_{n-1}$ from two sides, we can obtain that
		\begin{equation*}
			\begin{split}
				&\big(1+2{c_1}\Delta\big)e^{\varepsilon n\Delta}F_n-\big(1+2{c_1}\Delta\big)e^{\varepsilon (n-1)\Delta}F_{n-1}\\
				\leq &\big(e^{\varepsilon \Delta}-(1+2{c_1}\Delta)\big)e^{\varepsilon (n-1)\Delta}F_{n-1}+2{c_2}\Delta e^{\varepsilon n\Delta}\mathbb{E}|Z_{n-M}|^2\\
				&-2{c_3}\Delta e^{\varepsilon n\Delta}\mathbb{E}|Z_n|^{\Gamma}+2{c_4}\Delta e^{\varepsilon n\Delta}\mathbb{E}|Z_{n-M}|^{\Gamma}.
			\end{split}
		\end{equation*}
		Then it is not difficult to get that
		\begin{equation*}\label{5.0}
			\begin{split}
				&e^{\varepsilon n\Delta}\mathbb{E}|Z_n|^{2}\leq e^{\varepsilon n\Delta}F_n\\
				\leq&\big(1+2{c_1}\Delta\big)F_0+\big(e^{\varepsilon \Delta}-(1+2{c_1}\Delta)\big)\sum_{i=0}^{n-1}e^{\varepsilon i\Delta}F_i+2{c_2}\Delta \sum_{i=1}^{n}e^{\varepsilon i\Delta}\mathbb{E}|Z_{i-M}|^2\\
				&-2{c_3}\Delta \sum_{i=1}^{n}e^{\varepsilon i\Delta}\mathbb{E}|Z_i|^{\Gamma}+2{c_4}\Delta\sum_{i=1}^{n} e^{\varepsilon i\Delta}\mathbb{E}|Z_{i-M}|^{\Gamma}\\
				\leq&\big(1+2{c_1}\Delta\big)\Big(\mathbb{E}|Z_0|^{2}+\Delta\mathbb{E}\big\|\beta(Z_0,Z_{-M})\big\|^{2}\Big)\\
				&+\big(e^{\varepsilon \Delta}-(1+2{c_1}\Delta)\big)\sum_{i=0}^{n-1}e^{\varepsilon i\Delta}\Big(\mathbb{E}|Z_i|^{2}+\Delta\mathbb{E}\big\|\beta(Z_i,Z_{i-M})\big\|^{2}\Big)\\
				&+2{c_2}\Delta \sum_{i=1-M}^{n-M}e^{\varepsilon (i+M)\Delta}\mathbb{E}|Z_{i}|^2-2{c_3}\Delta \sum_{i=1}^{n}e^{\varepsilon i\Delta}\mathbb{E}|Z_i|^{\Gamma}+2{c_4}\Delta\sum_{i=1-M}^{n-M} e^{\varepsilon (i+M)\Delta}\mathbb{E}|Z_i|^{\Gamma}\\
				\leq&e^{\varepsilon \Delta}\Big(\mathbb{E}|Z_0|^{2}+\Delta\mathbb{E}\big\|\beta(Z_0,Z_{-M})\big\|^{2}\Big)+2{c_2}\Delta e^{\varepsilon \tau}\mathbb{E}|Z_0|^{2}+2{c_4}\Delta e^{\varepsilon \tau}\mathbb{E}|Z_0|^{\Gamma}\\
				&+2{c_2}\Delta e^{\varepsilon \tau}\sum_{i=1-M}^{-1}e^{\varepsilon i\Delta}\mathbb{E}|Z_i|^{2}+2{c_4}\Delta e^{\varepsilon \tau}\sum_{i=1-M}^{-1}e^{\varepsilon i\Delta}\mathbb{E}|Z_i|^{\Gamma}\\
				&-\big(1+2{c_1}\Delta-e^{\varepsilon \Delta}\big)\sum_{i=1}^{n-1}\Delta e^{\varepsilon i\Delta}\mathbb{E}\big\|\beta(Z_i,Z_{i-M})\big\|^{2}\\
				&-\big(1+2{c_1}\Delta-e^{\varepsilon \Delta}-2{c_2}\Delta e^{\varepsilon \tau}\big)\sum_{i=1}^{n-1}\mathbb{E} e^{\varepsilon i\Delta}|Z_i|^{2}-\big(2{c_3}\Delta-2{c_4}\Delta e^{\varepsilon \tau}\big)\sum_{i=1}^{n-1} e^{\varepsilon i\Delta}\mathbb{E}|Z_i|^{\Gamma}.\\
			\end{split}
		\end{equation*}
		Define
		\begin{equation*}
			f(\Delta)=1+2{c_1}\Delta-e^{\varepsilon \Delta}-2{c_2}\Delta e^{\varepsilon \tau}~~~\text{and}~~~
			g(\Delta)=2{c_3}\Delta-2{c_4}\Delta e^{\varepsilon \tau}.
		\end{equation*}
		Then one can see that
		\begin{equation*}
			f'(\Delta)=2{c_1}-\varepsilon e^{\varepsilon\Delta}-2{c_2}e^{\varepsilon\tau},~~~	f''(\Delta)=-\varepsilon^2 e^{\varepsilon\Delta},
		\end{equation*}
	and
		\begin{equation*}
			g'(\Delta)=2{c_3}-2{c_4}e^{\varepsilon\tau}.
		\end{equation*}
		We can observe that $f(0)=0$ and $f'(0)>0$ when $\varepsilon+2{c_2}e^{\varepsilon\tau}<2{c_1}$. It means that $f(\Delta)$ is an increasing function in a sufficiently small interval. In addition, $f''(\Delta)<0$ implies $f(\Delta)$ is concave function, so there exists a $\Delta'$ such that $f'(\Delta')=0$. Then  $f(\Delta)$ increases strictly when $\Delta<\Delta'$ and $f(\Delta)$ decreases strictly when $\Delta>\Delta'$. Therefore, there exists a $\Delta^*>\Delta'$ satisfying $f(\Delta^*)=0$, and for all $\Delta<\Delta^*$
 		\begin{equation*}\label{5.1}
			1+2{c_1}\Delta-e^{\varepsilon \Delta}-2{c_2}\Delta e^{\varepsilon \tau}>0.
		\end{equation*}
		Next we use the same skill to analyze $g(\Delta)$. We can see that $g(0)=0$ and $g'(\Delta)>0$ when ${c_3}>{c_4}$, $\varepsilon\leq\frac{1}{\tau}\log(\frac{c_3}{c_4})$, so $g(\Delta)>0$.

		To sum up, we obtain that for all sufficiently small $\Delta<\min\{\frac{l-1}{2c_1},\Delta^*\,\frac{1}{K}\}$, there exists a positive constant $\Upsilon$ independent of n such that
		\begin{equation*}
			e^{\varepsilon n\Delta}\mathbb{E}|Z_n|^{2}\leq\Upsilon,
		\end{equation*}
		which means that
		\begin{equation*}
			\limsup_{n\rightarrow\infty}\frac{\log\mathbb{E}|Z_n|^{2}}{n\Delta}\le-\varepsilon.
		\end{equation*}
	\end{proof}
\begin{rem}
	We should note that if the corresponding assumptions in \cite{35,28,29} are replaced by Assumption \ref{ass5.1}, the stability results  can not be obtained. But, on the contrary, we can also get Theorem \ref{thm5.2} under the corresponding assumptions in \cite{35,28,29}.
\end{rem}
\begin{rem}
		The locally Lipschitz conditions of drift and diffusion coefficients were used in the proof process in \cite{122}. 
		The advantage of our proof is that, 
		by borrowing the technique in \cite{31}, we can still get the exponential mean-square stability of BEM without using the locally Lipschitz conditions. 
\end{rem}

	\section{Numerical experiments}
	\paragraph{Example 1 }
	Consider
	the following scalar nonlinear SDDE
	\begin{equation}\label{exm}
		d x(t)=\left(x(t)-4x(t)^3+x(t-\tau)\right) d t+\left(x(t)^2-x(t-\tau)+2\right) dW(t)
	\end{equation}
	on $t\in [0,1]$. Here, the initial data $ \varphi (t)=|t|^{\frac{1}{2}}+1$, $t\in[-\tau,0]$.
	Now we verify that the drift and diffusion coeficients fulfill  Assumption \ref{a1}. Let q=3.
	Then,
	
	\begin{equation*}
		\begin{split}
			&\langle x-\bar{x} , \alpha(x,y)-\alpha(\bar{x},\bar{y})\rangle +\frac{(q-1)}{2}|\beta(x,y)-\beta(\bar{x},\bar{y})|^{2}\\
			\leq&|x-\bar{x}|^2-4(x-\bar{x})(x^3-\bar{x}^3)+(x-\bar{x})(y-\bar{y})+2|x^2-\bar{x}^2|^2+2|y-\bar{y}|^2\\
			\leq&|x-\bar{x}|^2+\frac{1}{2}|x-\bar{x}|^2+\frac{1}{2}|y-\bar{y}|^2+2|y-\bar{y}|^2\\
			&-4|x-\bar{x}|^2(x^2+x\bar{x}+\bar{x}^2)+2|x-\bar{x}|^2(x^2+2x\bar{x}+\bar{x}^2)\\
			\leq&|x-\bar{x}|^2+\frac{1}{2}|x-\bar{x}|^2+\frac{1}{2}|y-\bar{y}|^2+2|y-\bar{y}|^2-2|x-\bar{x}|^2(x^2+\bar{x}^2)\\
			\leq&\frac{3}{2}|x-\bar{x}|^2+\frac{5}{2}|y-\bar{y}|^2.
		\end{split}
	\end{equation*}
	And Assumption \ref{a3} is simple to be tested as well. 
	\begin{figure}[htbp] 
		\centering
		\includegraphics[height=7.4cm,width=9cm]{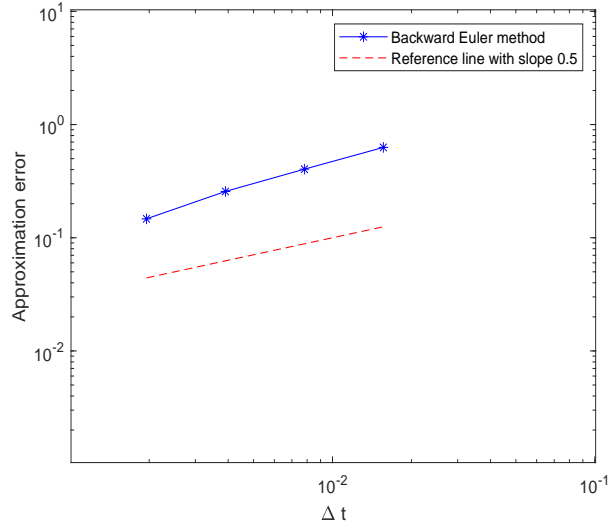}
		\caption{\label{tu1} Convergence rate of BEM for (\ref{exm})}
	\end{figure}
	In order to check the theory in Theorem \ref{thm3.2},  we perform a numerical experiment with four different stepsizes $\Delta=2^{-9}$, $2^{-8}$, $2^{-7}$, $2^{-6}$ at $T=1$. The numerical solution with stepsize $\Delta=2^{-11}$ is regarded as the exact solution of this experiment since it is difficult to be expressed explicitly. Then mean-square error can be estimated by computing the average of 500 sample paths' errors between exact solutions and numerical solutions.
	Figure \ref{tu1} illustrates the mean-square error which is defined by
	\begin{equation*}
		(\mathbb{E}|x(T)-Z_N|^{2})^{\frac{1}{2}}\approx \left(\frac{1}{500}\sum_{i=1}^{500}|x^{i}(T)-Z^{i}_N|^{2}\right)^{\frac{1}{2}}.
	\end{equation*}

	From Figure \ref{tu1}, we can observe that the convergence rate of BEM is close to 0.5, which means that it is coincident with the theoretical conclusion in Theorem \ref{thm3.2}.
	\paragraph{Example 2 }
	Consider
	the following scalar nonlinear SDDE
	\begin{equation}\label{exm2}
		d x(t)=\left(-2x(t)-4x(t)^3+x(t-\tau)\right) d t+\left(x(t)^2+\frac{1}{2}x(t-\tau)\right) dW(t),
	\end{equation}
		on $t\geq 0$. Here, the initial data $ \varphi (t)=|t|^{\frac{1}{2}}+3$, $t\in[-\tau,0]$.	
	Let $l=2$, then we can see that
	\begin{equation*}
		\langle x, \alpha(x,y)\rangle 
		=x(-2x-4x^3+y)
		\leq-\frac{3}{2}|x|^2-4|x|^4+\frac{1}{2}|y|^2,
	\end{equation*}
	\begin{equation*}
		|\beta(x,y)|^{2}=|x^2+\frac{1}{2}y|^{2}\leq2|x|^4+\frac{1}{2}|y|^2,
	\end{equation*}
	which means that the drift and diffusion coefficients satisfy Assumption \ref{ass5.1}.

	\begin{figure}[htbp] 
		\centering
		\includegraphics[height=7.4cm,width=9cm]{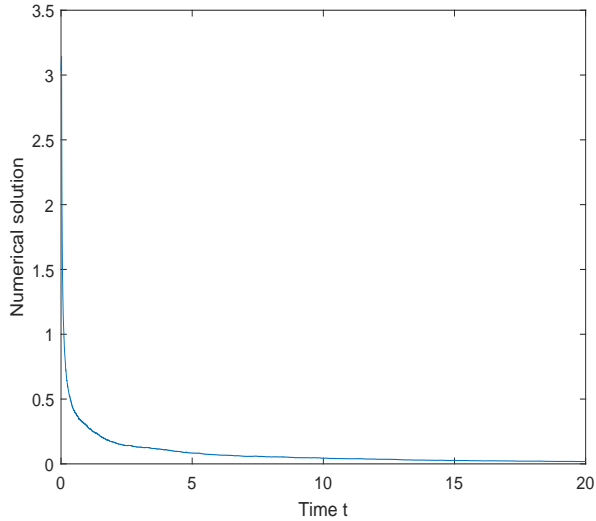}
		\caption{\label{tu2} Mean-square stability of BEM with $\Delta=0.001$}
	\end{figure}

We compute the average of the numerical solutions simulated by 500 sample paths with stepsize $\Delta=0.001$ and plot its trajectory in Figure \ref{tu2}. From Figure \ref{tu2}, we can know that the trajectory tends to  zero as time goes on, which means that the numerical solution of BEM is mean-square stable.

\section*{Acknowledgements}
The authors would like to thank the reviewers for their work.

\section*{Funding}
This work is supported by the National Natural
Science Foundation of China (11871343) and Shanghai Rising-Star Program (22QA1406900).

\section*{Availability of data and materials}
Not applicable.

\section*{Competing interests}
The authors declare that they have no competing interests.

\end{document}